\newtheorem{theorem}{Th\'eor\`eme}[section]
\newtheorem{lemma}[theorem]{Lemme}
\newtheorem{proposition}[theorem]{Proposition}
\newtheorem{corollary}[theorem]{Corollaire}
\newtheorem{example}[theorem]{Exemple}
\theoremstyle{definition}
\newtheorem{heuristic}[theorem]{Heuristique}
\newtheorem{remark}[theorem]{Remarque}
\numberwithin{equation}{section}
\def\notdiv{\nmid}
\def\div{\,\vert\,}
\def\N{\mathbb{N}}
\def\C{\mathbb{C}}
\def\Q{\mathbb{Q}}
\def\Z{\mathbb{Z}}
\def\Sauf{\!\setminus\!}
\def\No{{\rm N}}
\def\Frac#1#2{\hbox{\footnotesize $\displaystyle \frac{#1}{#2}$}}
\def\plus{\displaystyle\mathop{\raise 2.0pt \hbox{$\bigoplus $}}\limits}
\def\prd{ \displaystyle\mathop{\raise 2.0pt \hbox{$\prod$}}\limits}
\def\sm{  \displaystyle\mathop{\raise 2.0pt \hbox{$\sum$}}\limits}
\author{Georges  Gras}
\address{Villa la Gardette \\ chemin Ch\^ateau Gagni\`ere \\  F--38520 Le Bourg d'Oisans.}
\email{g.mn.gras@wanadoo.fr 
 {\it url : }\url{https://www.researchgate.net/profile/Georges_Gras/?dbw=true} \, -- \, \url{http://monsite.orange.fr/maths.g.mn.gras/}}
\keywords{Fermat quotients; cyclotomic polynomials; probabilistic number theory}
\subjclass{Primary 11F85;  11R18}
\begin{document}
 
\title[Etude probabiliste des $p$-quotients de Fermat]{Etude probabiliste des $p$-quotients de Fermat}

\begin{abstract} 
For a fixed integer $a\geq 2$, we suggest that the probability of nullity of the Fermat quotient $q_p(a)$ is much lower than~$\frac{1}{p}$
for any arbitrary large prime number~$p$. For this we use various heuristics, justified by means of numerical computations and analytical results, which may imply the finiteness of the $q_p(a)$ equal to 0 and the existence of integers $a$ such that $q_p(a)\ne 0 \ \forall p$.
However no proofs are obtained concerning these heuristics.
\end{abstract}

\date{4 Septembre 2014}

\maketitle

\vspace{-0.8cm}

\section{Introduction}\label{intro}
 Nous \'etudions la probabilit\'e de nullit\'e du $p$-quotient de Fermat $q_p(a)$, de $a$ fix\'e dans $\N\Sauf \{0, 1\}$, 
$p$ \'etant la variable,
\`a partir du fait que ceci a lieu si et seulement si $p^2$ divise la valeur en $a$ du $m$-i\`eme polyn\^ome cyclotomique $\Phi_m$, o\`u $m\div p-1$ est l'ordre de $a$ modulo $p$ (par abus $q_p(a) = 0$ signifie $\frac{a^{p-1} - 1}{p} \equiv 0 \pmod p$).

\smallskip\noindent
Dans un premier temps, nous utilisons un r\'esultat g\'en\'eral de Andrew Granville (1998) qui, sous la v\'eracit\'e de la conjecture $ABC$, 
permet, gr\^ace \`a un principe local-global diophantien, de d\'eterminer (pour $f \in \Z[x]$) la densit\'e
des entiers $A\in\N$ tels que $f(A)$ est sans facteur carr\'e. 
Pour $\Phi_m$, la densit\'e relative \`a la seule condition locale $p^2 \notdiv \Phi_m(A)$,
pour $p\equiv 1\! \pmod m$, est  \'egale \`a $1 - \frac{\varphi(m)}{p^2}$ o\`u $\varphi$ est l'indicateur d'Euler, celle relative 
\`a la condition $\Phi_m(A)$ sans facteur carr\'e \'etant  \'egale au produit $\prod_{p \equiv 1 \!\!\pmod m}(1 - \frac{\varphi(m)}{p^2})$ des densit\'es locales.
Notons que pour tout $p$, la {\it densit\'e} des $A \in \N \Sauf p\N$ tels que $q_p(A)=0$ est trivialement $\frac{1}{p}$
(resp. $\frac{p-1}{p^2}$ pour celle des $A\in \N$).

\smallskip\noindent
On en d\'eduit l'heuristique sui\-vante reposant sur le fait que les proba\-bilit\'es sont inf\'erieures aux densit\'es  cor\-respondantes
(i.e., lorsque $a$ est remplac\'e par la variable al\'eatoire $A \in \N$)~: pour $a$ fix\'e et $p$ arbitraire assez grand, on a la majoration~:

\medskip
\centerline {${\rm Prob}\big( q_p(a)=0\big) < \Frac{1}{p\,(p-1)^2} \sm_{d \div p-1} \varphi(d)^2\!<\Frac{1}{p}$, }

\smallskip
qui ne renseigne que partiellement sur la finitude ou non des $q_p(a)$ nuls. 

\smallskip\noindent
Dans un second temps, nous montrons comment tenir compte d'avantage du fait qu'en pratique $a$ est fix\'e une fois pour toutes 
et que si $q_p(a) = 0$ alors $q_p(a^j) = 0$ pour les exposants $j$ tels que $a^j \in [2, p[$
($p$ \'etant la variable al\'eatoire tendant vers l'infini). 
On \'etudie alors une heuristique stipulant l'existence d'une loi de proba\-bilit\'e binomiale, pour le nombre d'entiers $z \in [2, p[$ 
tels que $q_p(z)=0$, \`a savoir ${\rm Prob}\big(\big\vert \big\{z \in [2,p[,\, q_p(z)=0 \big\} \big\vert  \geq n\big) = 
1 - \sm_{j=0}^{n-1} \hbox{$\binom{p-2} {j}$}\Frac{1}{p^j}\Big (1-\Frac{1}{p} \Big)^{p-2-j}$\!\!,
qui implique\-rait, via le principe de Borel--Cantelli, la finitude des $p$ tels que $q_p(a)=0$.

\smallskip\noindent
Enfin, en utilisant le fait que le produit formel $\widetilde {\mathcal P}(A) =\prd_{m \geq 1}\Frac{\Phi_m(A)}{{\rm p.g.c.d.\,}(\Phi_m(A),m)}$ 
est divisible par tous les nombres premiers et que $q_p(A)=0$ si et seulement si $p^2 \div \widetilde {\mathcal P}(A)$, on obtient la densit\'e 
des $A\in\N$ tels que $q_p(A) \ne 0\ \forall p \leq x$ (cf.  Th\'eor\`eme \ref{theo2}).

\smallskip\noindent
En toute hypoth\`ese, on peut envisager que la probabilit\'e de nullit\'e de $q_p(a)$ (pour $a$ fix\'e et $p\to \infty$) est 
strictement inf\'erieure \`a $\frac{1}{p}$ et que la conjecture sur la finitude des premiers $p$ tels que $q_p(a)=0$ reste cr\'edible 
(conjecture qui est un cas parti\-culier des conjectures analogues que nous avons formul\'ees dans le cadre g\'en\'eral des r\'egulateurs 
$p$-adiques d'un nombre alg\'ebrique, cf. \cite{Gr}).

\section{Cyclotomie et quotients de Fermat} \label{section2}

\subsection{Rappels sur le quotient de Fermat}\label{subfermat}
Soit  $a \in \N \Sauf \{0, 1\}$ fix\'e. Soit $p$ un nombre premier ne divisant pas $a$. 
Soit $m = o_p(a)$, divisant $p-1$, l'ordre de $a$ modulo $p$ et soit $\xi$ une racine primitive 
$m$-i\`eme de l'unit\'e dans $\C$~; alors on peut \'ecrire $a^m-1 = \prod_{j=1}^m (a- \xi^j) \equiv 0 \pmod p$. 

\smallskip
Comme $m$ est l'ordre de $a$ modulo $p$, c'est le facteur de $a^m-1$ d\'efini par~:
$$\Phi_m(a) = \prd_{t \in (\Z/m\Z)^\times} (a- \xi^t)$$

qui est dans $p\,\Z$,  o\`u $\Phi_m$ est le $m$-i\`eme polyn\^ome cyclotomique.
De fa\c con pr\'ecise on a la relation $\Frac{a^{m} - 1}{p}  = 
\Frac{\Phi_m(a)}{p} \times \prd_{\substack {d\vert m, \\ d \ne m}} \Phi_d(a)$, o\`u
$\prd_{\substack {d\vert m, \\d \ne m}}  \Phi_d(a) \not\equiv 0 \!\!\pmod p$~;
en effet, si l'on avait $p\div \Phi_d(a)$ pour $d\div m, d\ne m$, alors on aurait $p\div a^d-1$ et $m$ ne serait pas 
l'ordre de $a$ modulo $p$. On a donc l'implication $m = o_p(a) \Longrightarrow p\div \Phi_m(a)$.

\smallskip
 La r\'eciproque est inexacte~; par exemple, si $p=3$, $m=6$, 
$a=5$, on a $\Phi_m(a)=7\times p$ avec pour ordre de $a$ modulo $p$, $o_{p}(a)=2$ et $\Phi_2(a)=2\times p$ comme attendu,
mais on a ici $m = p\, .\, o_p(a)$ (i.e., ${\rm p.g.c.d.\,}(\Phi_m(a), m) = p$). 
Ce ph\'enom\`ene sera pr\'ecis\'e par le Th\'eor\`eme \ref{theorem}

\begin{remark}\label{rema1}
Si l'on pose $q_p(a) := \frac{a^{p-1}-1}{p}$, $q'_p(a) := \frac{a^{o_p(a)}-1}{p}$ et $p-1= t\,o_p(a)$, il vient
$q_p(a) \equiv t\, q'_p(a) \equiv \frac{-1}{o_p(a)} q'_p(a) \pmod p$~; on peut aussi envisager l'expression
$q''_p(a) := \frac{\Phi_{o_p(a)}(a) }{p}$. Ces diff\'erentes d\'efinitions possibles 
du quotient de Fermat sont \'equivalentes en ce qui concerne sa nullit\'e modulo $p$.

\smallskip
En particulier, on a $q_p(a) \equiv 0 \pmod p$ si et seulement si $\Phi_{o_p(a)}(a)\equiv 0 \pmod {p^2}$
 (pour diverses propri\'et\'es des quotients de Fermat on peut se reporter \`a \cite{EM}, \cite{Hat}, \cite{He}, \cite{KJ}, \cite{Sh}, \cite{OS},
 ainsi qu'\`a \cite{Si}, \cite{GM}, \cite{W} pour les liens avec la conjecture $ABC$). 
\end{remark}

\subsection{Utilisation des corps cyclotomiques}\label{subcyclo} Nous n'utilisons que des propri\'et\'es classiques
que l'on peut trouver dans \cite{Wa}.

\begin{lemma}\label{lem2} Soient $a \in \N \Sauf \{0, 1\}$, $p\notdiv a$, et $m \geq 1$. Alors la congruence
$\Phi_m(a) \equiv 0\!\pmod {p^h}$, $h\geq 1$, est \'equivalente \`a l'existence d'un  couple $(\xi, {\mathfrak P})$, unique \`a conju\-gaison pr\`es, tel que $a \equiv \xi \pmod { {\mathfrak P}^h}$, o\`u $\xi$ est une racine primitive $m$-i\`eme de l'unit\'e et ${\mathfrak P}$ 
un id\'eal premier de $\Q(\xi)$ au-dessus de $p$,  de degr\'e r\'esiduel $1$. 

 En outre, lorsque ceci a lieu, $m$ est n\'ecessairement de la forme $p^e .\, o_p(a)$, $e\geq 0$.
\end{lemma}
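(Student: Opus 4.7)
I would base the proof on the classical factorization $\Phi_m(a) = N_{\Q(\xi)/\Q}(a-\xi) = \prd_{t \in (\Z/m\Z)^\times}(a-\xi^t)$ and localize its $p$-adic valuation at the primes of the ring of integers $\mathcal{O}$ of $\Q(\xi)$ lying above $p$. Since $\Phi_m(a)\in\Z$, passing to the ideal factorization of $(a-\xi)$ and taking absolute norms yields the key identity
\[
v_p\bigl(\Phi_m(a)\bigr) \;=\; \sm_{\mathfrak{P}\div p}f(\mathfrak{P}/p)\,v_\mathfrak{P}(a-\xi),
\]
so everything reduces to controlling the individual $v_\mathfrak{P}(a-\xi)$.

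To identify the contributing primes I would write $m = p^e m'$ with $\gcd(m',p)=1$ and exploit that $\Q(\xi)$ is the compositum of $\Q(\xi_{m'})$ (unramified at $p$) and $\Q(\zeta_{p^e})$ (totally ramified at $p$, with $1-\zeta_{p^e}$ a uniformizer of each prime above $p$). Reduction modulo any $\mathfrak{P}\div p$ then collapses every primitive $p^e$-th root of unity to $1$, so each primitive $m$-th root $\xi^t$ reduces to a primitive $m'$-th root of unity in $\mathcal{O}/\mathfrak{P}$. If $a\equiv\xi^t\pmod{\mathfrak{P}}$ holds for some $t$, the element $a\bmod p\in\F_p$ must coincide with that primitive $m'$-th root, which thus lies in $\F_p$; this forces $m'\div p-1$, and simultaneously $m'=o_p(a)$, $m=p^e\cdot o_p(a)$, and $f(\mathfrak{P}/p)=1$.

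For a fixed primitive $m$-th root $\xi$, at most one prime $\mathfrak{P}$ above $p$ realises $\xi\equiv a\pmod{\mathfrak{P}}$: distinct primes above $p$ correspond to distinct $\Q_p$-embeddings of $\Q(\xi_{m'})$ and hence send $\xi$ to distinct primitive $m'$-th roots in $\F_p$, so only one of them can match $a\bmod p$. Combined with $f(\mathfrak{P}/p)=1$, the displayed identity collapses to $v_p(\Phi_m(a)) = v_\mathfrak{P}(a-\xi)$, whence
\[
p^h\div\Phi_m(a) \;\Longleftrightarrow\; v_\mathfrak{P}(a-\xi)\geq h \;\Longleftrightarrow\; a\equiv\xi\pmod{\mathfrak{P}^h}.
\]

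Uniqueness up to conjugation then follows from the free transitive action of $\mathrm{Gal}(\Q(\xi)/\Q)$ on primitive $m$-th roots: each $\sigma$ fixes $a\in\Z$ and sends a solution pair $(\xi,\mathfrak{P})$ to another solution pair $(\sigma\xi,\sigma\mathfrak{P})$, and counting the $\varphi(m)$ primitive $m$-th roots together with their uniquely associated primes shows that all solution pairs form a single Galois orbit. The principal obstacle I foresee is the structural analysis of $p$ in $\Q(\xi)$ via $m=p^em'$---in particular the factorization of $p$ and the fact that $1-\zeta_{p^e}$ generates the prime above $p$ in $\Q(\zeta_{p^e})$---since once this is granted the norm identity and the uniqueness argument both proceed essentially mechanically.
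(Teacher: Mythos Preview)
Your argument is correct and rests on the same cyclotomic ingredients as the paper's proof: the norm identity $\Phi_m(a)=N_{\Q(\xi)/\Q}(a-\xi)$, the decomposition $m=p^em'$ with $\Q(\xi)=\Q(\zeta_{p^e})\Q(\xi_{m'})$, the reduction $\zeta_{p^e}\equiv 1\pmod{\mathfrak P}$, and the injectivity of the reduction map on $m'$-th roots of unity. The organization, however, differs. You encode uniqueness of $\mathfrak P$ through the valuation identity $v_p(\Phi_m(a))=\sum_{\mathfrak P\mid p}f(\mathfrak P/p)\,v_{\mathfrak P}(a-\xi)$ together with the observation that distinct primes over $p$ give distinct residues of $\xi$ in $\F_p$, so only one term survives and $f=1$ follows automatically. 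The paper instead argues by contradiction: assuming two primes $\mathfrak P_1\ne\mathfrak P_2$ both divide $a-\xi$, it transports $\mathfrak P_2$ to $\mathfrak P_1$ by Galois to obtain $\xi^t-\xi\equiv 0\pmod{\mathfrak P_1}$ for some nontrivial $t$, then eliminates this by a short case split on whether $p\mid m$. Your valuation route is a bit more streamlined and yields $f(\mathfrak P/p)=1$ and the equivalence $v_p(\Phi_m(a))=v_{\mathfrak P}(a-\xi)$ in one stroke; the paper's contradiction route is slightly more elementary and avoids invoking the ideal norm explicitly. Either way the substance is the same, and your identification of the structural input (ramification of $p$ in $\Q(\zeta_{p^e})$ versus $\Q(\xi_{m'})$) as the crux is exactly right.
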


\begin{proof} La relation $a \equiv \xi \pmod { {\mathfrak P}^h}$, $h\geq 1$,
 prouve que ${\mathfrak P}$ est de degr\'e r\'esi\-duel~1 car
 l'anneau des entiers de $\Q(\xi)$ est $\Z[\xi]$ et $\xi$ est congrue \`a un rationnel modulo~${\mathfrak P}$.
 Un sens est donc \'evident puisque $\Phi_m(a) = \No_{\Q(\xi)/\Q} (a- \xi)$.

Supposons $\Phi_m(a) \equiv 0 \pmod {p^h}$, $h\geq 1$. Comme $\Phi_m(a) =\prd_{t \in (\Z/m\Z)^\times} (a- \xi^t) \equiv 0 \pmod {p^h}$, il existe ${\mathfrak P}_1 \div p$ dans $\Q(\xi)$ tel que $a- \xi  \equiv 0 \pmod {{\mathfrak P}_1}$.

\smallskip
Supposons que l'on ait $a- \xi  \equiv 0 \pmod {{\mathfrak P}_2}$, ${\mathfrak P}_2 \div p$, avec 
${\mathfrak P}_2 \ne {\mathfrak P}_1$~; il existe donc une conju\-guaison non triviale $\xi \mapsto \xi^t \ne \xi$ telle que
${\mathfrak P}_2 ={\mathfrak P}_1^{t^{-1}} \!\ne {\mathfrak P}_1$ et on obtient
$a- \xi^t  \equiv 0 \pmod {{\mathfrak P}_1}$, ce qui conduit \`a $\xi^t - \xi  \equiv 0 \pmod {{\mathfrak P}_1}$.
D'o\`u deux cas~:

\medskip
(i) $p\notdiv m\ \&\ \xi^t \ne \xi$~; alors $\xi^t - \xi $ est une unit\'e en $p$ (absurde).

\smallskip
(ii) $p \div m\ \&\ \xi^t \ne \xi$.

\smallskip
Donc si $p\notdiv m$, un seul id\'eal premier ${\mathfrak P}\div p$ intervient et on a $a- \xi  \equiv 0 \pmod{ {\mathfrak P}^h}$.

\smallskip
Examinons le cas $p \div m\ \&\ \xi^t  \ne  \xi$ en consid\'erant le sch\'ema suivant~:
\unitlength=0.6cm
$$\vbox{\hbox{\hspace{-2cm}  \begin{picture}(11.5,5.5)
\put(4.2,4.50){\line(1,0){2.5}}
\put(4.0,0.50){\line(1,0){2.5}}
\put(3.50,1.1){\line(0,1){3.0}}
\put(7.50,1.1){\line(0,1){3.0}}

\put(6.85,0.4){$\Q(\zeta)\ \ {\mathfrak p}$}
\put(6.85,4.4){$\Q(\xi)\ \ {\mathfrak P}$}
\put(2.8,4.4){$\Q(\xi')$}
\put(2.1,4.4){${\mathfrak P}'$}
\put(2.5,0.4){$p$}
\put(3.3,0.40){$\Q$}

\put(7.8,2.4){{\footnotesize d\'ecomposition}}
\put(4.2,5.0){\footnotesize ramification}
\end{picture}   }} $$
\unitlength=1.0cm

 Si l'on pose $m= p^e m'$, $e\geq 1$, $p\notdiv m'$, et $\xi = \zeta\,\xi'$
($\zeta$ d'ordre $p^e$, $\xi'$ d'ordre $m'$), il vient $\zeta^t\xi'{}^t - \zeta\,\xi' \equiv 0 \pmod{ {\mathfrak P}_1}$.
Or on a toujours $\zeta \equiv 1 \pmod {{\mathfrak P}_1}$ car dans $\Q(\zeta)$ il y a un unique id\'eal premier
${\mathfrak p} = (1-\zeta)$ totalement ramifi\'e dans $\Q(\zeta)/\Q$, donc tel que ${\mathfrak P}_1 \div {\mathfrak p}$
et ${\mathfrak P}_2 \div {\mathfrak p}$
(si $p^e=2$, $\Q(\zeta)= \Q$ et ${\mathfrak p}=(2)$).

\smallskip
D'o\`u $\xi'{}^t -\xi'  \equiv 0 \pmod {{\mathfrak P}'_1 = {\mathfrak P}_1 \cap \Z[\xi'] }$ dans $\Q(\xi')$, 
et par cons\'equent $\xi'{}^t =\xi'$ (i.e., $t\equiv 1 \pmod {m'}$) puisque $p \notdiv m'$. 
Mais ceci implique ${\mathfrak P}_2 ={\mathfrak P}_1$ car $\Q(\xi)/ \Q(\xi')$ est totalement ramifi\'ee en $p$
et $t$ fixe $\Q(\xi')$  (absurde). 

\smallskip
On a donc obtenu dans tous les cas $a- \xi  \equiv 0 \pmod{ {\mathfrak P}^h}$ pour un unique ${\mathfrak P} \div p$.

\smallskip
Montrons enfin que $m' = o_p(a)$ dans tous les cas.
On a \`a ce stade $m=p^e\,m'$, $e\geq 0$, et $a\equiv \xi'  \pmod{ {\mathfrak P}' =  {\mathfrak P}\cap \Z[\xi']}$
puisque $\zeta \equiv 1 \pmod { {\mathfrak P}}$ (y compris si $e=0$ o\`u $\zeta=1$), ce qui implique 
$a^d \equiv 1 \pmod p$ (i.e., $\xi'{}^d  \equiv 1\pmod{ {\mathfrak P}'}$) si et seulement si $\xi'{}^d = 1$, 
d'o\`u $d \equiv 0 \pmod {m'}$~; d'o\`u le lemme.
\end{proof} 

Revenons \`a l'aspect r\'eciproque de l'implication $m = o_p(a) \Longrightarrow p\div \Phi_m(a)$ en te\-nant compte des questions de divisibilit\'es par $p^h$. D'apr\`es le lemme pr\'ec\'edent, si $p\div \Phi_m(a)$, on a $m = p^e m'$, $e\geq 0$, o\`u $m' = o_p(a)$,
et par cons\'equent $p \div \Phi_{m'}(a)$. 

\smallskip
Le cas $p\notdiv m$ est donc r\'esolu et conduit \`a l'\'equivalence partielle~:
$$p\div \Phi_m(a) \ \& \  p\notdiv m \  \Longleftrightarrow \  m = o_p(a). $$

Dans ce cas toute puissance $p^h$, $h\geq 1$, peut diviser $\Phi_m(a)$ (c'est le probl\`eme du quotient de Fermat pour $h\geq 2$).

\begin{lemma}\label{lem3} Supposons que pour $h\geq 1$, $p^h\div \Phi_m(a)$ avec $m = p^e m'$, $e\geq 1$, $p\notdiv m'$.
Alors n\'ecessairement $h=1$ (i.e., $\Phi_m(a) \not\equiv 0 \pmod {p^2}$) sauf si $p^e=m=2$,  auquel cas si 
$a=-1+ 2^h\,u$, $h \geq 1$ quelconque, on a $\Phi_2(a) =  2^h\ u$, $\Phi_1(a) = -2+ 2^h\, u$.
\end{lemma}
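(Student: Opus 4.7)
La strat\'egie repose sur le lemme de l'\'el\'evation des exposants (LTE), combin\'e avec le Lemme \ref{lem2}. D'abord, j'\'etablirais la r\'eciproque utile du Lemme \ref{lem2} : pour chaque $i \in \{0, 1, \ldots, e\}$, on a $p \div \Phi_{p^i m'}(a)$. En effet, puisque $m' = o_p(a) \div p-1$, on a $p \equiv 1 \pmod{m'}$, ce qui assure qu'un id\'eal premier de $\Q(\zeta_{p^i m'})$ au-dessus de $p$ est de degr\'e r\'esiduel $\text{ord}_{m'}(p) = 1$ ; le sens facile du Lemme \ref{lem2} ($\Phi_m(a) = {\rm N}_{\Q(\xi)/\Q}(a-\xi)$) fournit alors la divisibilit\'e voulue.

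Pour $p$ impair, j'appliquerais LTE au couple $(a^{m'}, 1)$ avec exposant $p^e$ : comme $p \div a^{m'}-1$ et $p \notdiv a$, on obtient $v_p(a^m - 1) = v_p(a^{m'} - 1) + e$. En utilisant la factorisation cyclotomique $a^n-1 = \prd_{d \div n}\Phi_d(a)$, le quotient $(a^m-1)/(a^{m'}-1) = \prd_{d \div m',\,1 \leq i \leq e} \Phi_{p^i d}(a)$ a pour valuation $p$-adique exactement $e$. D'apr\`es le Lemme \ref{lem2}, $p \div \Phi_{p^i d}(a)$ impose $d = o_p(a) = m'$, donc $\sm_{i=1}^e v_p(\Phi_{p^i m'}(a)) = e$. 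La premi\`ere \'etape garantit que chacun des $e$ termes de cette somme est $\geq 1$, si bien que tous valent exactement~$1$ ; en particulier $v_p(\Phi_m(a)) = 1$.

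Pour $p = 2$, la condition de degr\'e r\'esiduel~$1$ impos\'ee par le Lemme \ref{lem2} force $m' = 1$ (car $\text{ord}_{m'}(2) = 1 \iff m' \div 1$), d'o\`u $m = 2^e$ et $\Phi_{2^e}(a) = a^{2^{e-1}} + 1$. Pour $e \geq 2$, la congruence $a^2 \equiv 1 \pmod 8$ valable pour tout $a$ impair entra\^ine $a^{2^{e-1}} \equiv 1 \pmod 8$, soit $\Phi_{2^e}(a) \equiv 2 \pmod 8$, ce qui donne $v_2(\Phi_{2^e}(a)) = 1$. Pour $e = 1$, $\Phi_2(a) = a+1$ peut avoir une $2$-valuation arbitraire : l'identit\'e explicite $\Phi_2(-1+2^h u) = 2^h u$ et $\Phi_1(-1+2^h u) = -2 + 2^h u$ fournit directement les formules annonc\'ees et constitue l'exception du lemme.

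Le point d\'elicat r\'eside dans l'\'etape d'\'egalit\'e du cas $p$ impair : il est essentiel que \emph{chacun} des $\Phi_{p^i m'}(a)$ pour $1 \leq i \leq e$ soit bien divisible par $p$ (premi\`ere \'etape), sans quoi le comptage n'obligerait pas chaque valuation \`a valoir $1$. Le cas $p=2$ requiert un traitement ind\'ependant en raison de la forme modifi\'ee du LTE, mais la contrainte $m' = 1$ le ram\`ene \`a un calcul \'el\'ementaire sur $a^{2^{e-1}}+1$, faisant appara\^itre naturellement l'exception $m = 2$.
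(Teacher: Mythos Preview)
Your proof is correct and takes a genuinely different route from the paper's. The paper argues directly inside the cyclotomic field $\Q(\xi)$: from Lemme~\ref{lem2}, $p^h \div \Phi_m(a)$ yields $a \equiv \xi \pmod{\mathfrak{P}^h}$ with $\xi = \zeta\xi'$, and the identity $a - \xi = (a - \xi') + \xi'(1-\zeta)$ is examined term by term. The $\mathfrak{P}$-valuations of the three pieces are $h$, $h'\,p^{e-1}(p-1)$, and $1$ respectively, so the ultrametric inequality forces $h=1$ unless $p^{e-1}(p-1)=1$, i.e.\ $p^e=2$. Your approach stays essentially on the rational side: LTE gives $v_p\big((a^m-1)/(a^{m'}-1)\big)=e$, the cyclotomic factorisation and Lemme~\ref{lem2} localise this valuation onto the $e$ factors $\Phi_{p^i m'}(a)$, and since each is already divisible by $p$ the pigeonhole forces every valuation to equal~$1$. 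Both arguments ultimately rest on Lemme~\ref{lem2}; yours avoids computing in $\Z[\xi]$ and will feel natural to readers familiar with LTE, while the paper's argument is shorter and treats all primes uniformly until the final line.

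One small remark on your first step: showing that primes of $\Q(\zeta_{p^i m'})$ above $p$ have residue degree~$1$ is not by itself sufficient to invoke the easy direction of Lemme~\ref{lem2}; you also need the congruence $a \equiv \zeta_{p^i}\xi' \pmod{\mathfrak{P}}$. This follows immediately since $\zeta_{p^i} \equiv 1 \pmod{\mathfrak{P}}$ and $a \equiv \xi' \pmod{\mathfrak{P}'}$ (the latter because $m' = o_p(a)$), but it deserves a sentence --- it is exactly the argument the paper supplies later in the proof of Th\'eor\`eme~\ref{theorem}, so no circularity arises.
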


\begin{proof} 
On a donc par hypoth\`ese, d'apr\`es le Lemme \ref{lem2}, $a \equiv \xi  \pmod {{\mathfrak P}^h}$, pour $\xi = \zeta \xi'$ d'ordre $p^e m'$ ($\zeta$ d'ordre $p^e$, $\xi'$ d'ordre $m'$), et $a\equiv  \xi'  \pmod {{\mathfrak P}'{}^{h'}}$, ${\mathfrak P}' = {\mathfrak P}
\cap \Z[\xi']$, avec $h' \geq 1$ puisque $\zeta \equiv 1 \pmod{{\mathfrak P}}$~; on a l'identit\'e~:

\smallskip
\centerline{$a-\xi = a-\xi' + \xi'\,(1-\zeta),$}

\smallskip
 o\`u les ${\mathfrak P}$-valuations des termes sont respectivement $h,\  h' p^{e-1}(p-1), 1$.

\medskip
Si $h' p^{e-1}(p-1)>1$ on a n\'ecessairement $h=1$. Le cas $h' p^{e-1}(p-1)=1$ correspond au cas $p=2$, $h'=e=1$, donc
$o_2(a) = 1$, $\xi' = 1$, $\xi = -1$, $\Phi_2(a) = a+1$ et ${\rm p.g.c.d.}\,(2,\Phi_2(a))=2$ 
(e.g. $p=2$, $a = 23$, $m=2$, $\Phi_2(a) = 8\times 3$, $\Phi_1(a) = 2\times 11$, $h'=1$, $h=3$).
En dehors du cas $m=2$, $p=2$, $e=1$, on a $h=1$. 
\end{proof} 

En particulier, pour $m = p^e m'\ne 2$, $e \geq 1$, on a $p \div \Phi_m(a)$ et $p^2 \notdiv \Phi_m(a)$ (on rappelle que $m'=o_p(a)$).
Autrement dit, dans tous les cas o\`u $e \geq 1$, la valeur de $\Phi_m(a)$ ne peut renseigner sur le quotient de Fermat (dans le cas particulier
$p=2$, $m=2$, $\Phi_2(a) = a+1$, mais $q_2(a)=0$ signifie $a\equiv 1 \pmod 4$, or $a+1 \equiv 2 \pmod 4$).

\begin{theorem}\label{theorem}
 Pour tout $m \geq 1$, le p.g.c.d.  de $\Phi_m(a)$ et de $m$ est \'egal \`a 1 ou \`a
un nombre premier $p$. Dans ce dernier cas, $m= p^e .\, o_p(a)$, $e\geq 1$. R\'eciproquement,
pour tout premier $p$ et tout $e\geq 1$, $m= p^e .\, o_p(a)$ conduit \`a ${\rm p.g.c.d.\,}(\Phi_m(a), m) = p$.

Autrement dit, on a l'\'equivalence (pour tout $p$ et tout $m$)~:
$$p\div \Phi_m(a) \  \Longleftrightarrow \  m = p^e .\, o_p(a), \ e\geq 0, $$
\end{theorem}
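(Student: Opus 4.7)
The three pieces of the theorem---the equivalence and the two parts of the ${\rm p.g.c.d.\,}$ claim---can all be extracted from Lemmas~\ref{lem2} and \ref{lem3}. I would treat the equivalence first, and then use it together with an elementary order-theoretic argument to control the ${\rm p.g.c.d.}$

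For the equivalence $p \div \Phi_m(a) \Longleftrightarrow m = p^e \cdot o_p(a)$, $e \geq 0$, the direction $\Longrightarrow$ is exactly the final assertion of Lemma~\ref{lem2}. For $\Longleftarrow$, writing $m = p^e m'$ with $m' = o_p(a)$, I would construct data satisfying the hypothesis of Lemma~\ref{lem2}. Pick $\xi'$ a primitive $m'$-th root of unity with $a \equiv \xi' \pmod{{\mathfrak P}'}$ for some prime ${\mathfrak P}' \div p$ of $\Q(\xi')$; such $({\xi'},{\mathfrak P}')$ exists because $\Phi_{m'}(a) \equiv 0 \pmod p$, which is the classical fact recalled in \S\ref{subfermat}. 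Taking $\zeta$ a primitive $p^e$-th root of unity, $\xi = \zeta\xi'$ (primitive of order $m$ since ${\rm p.g.c.d.\,}(p^e, m') = 1$), and ${\mathfrak P}$ a prime of $\Q(\xi)$ lying above both $(1-\zeta)$ and ${\mathfrak P}'$, one has $\zeta \equiv 1 \pmod {\mathfrak P}$, hence $\xi \equiv \xi' \equiv a \pmod {\mathfrak P}$. Lemma~\ref{lem2} then yields $\Phi_m(a) \equiv 0 \pmod p$.

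Next I would show that ${\rm p.g.c.d.\,}(\Phi_m(a), m)$ has at most one prime divisor. Suppose two distinct primes $p \ne q$ divide it. Lemma~\ref{lem2} applied twice gives $m = p^e\, o_p(a) = q^f\, o_q(a)$ with $e, f \geq 1$ (since $o_p(a) \div p-1$ and $o_q(a) \div q-1$ are coprime to $p$ and $q$ respectively). Since $q \div m$ but $q \notdiv p^e$, it follows that $q \div o_p(a) \div p-1$, and symmetrically $p \div q-1$; this forces $q < p$ and $p < q$, absurde. Moreover, Lemma~\ref{lem3} bounds $v_p(\Phi_m(a))$ by $1$ whenever $p \div m$, except in the degenerate case $m = p^e = 2$, where one still has $v_p(m) = 1$. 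Together, these two bounds yield ${\rm p.g.c.d.\,}(\Phi_m(a), m) \in \{1, p\}$ for some prime $p$, with $m = p^e \cdot o_p(a)$, $e \geq 1$, whenever the ${\rm p.g.c.d.}$ equals $p$.

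The converse statement---that $m = p^e \cdot o_p(a)$ with $e \geq 1$ implies ${\rm p.g.c.d.\,}(\Phi_m(a), m) = p$---then follows from the equivalence of the first step ($p \div \Phi_m(a)$ and $p \div m$ force $p \div$ ${\rm p.g.c.d.}$) together with the valuation bounds above that prevent a higher power of $p$ or any other prime from entering. The main obstacle is precisely the converse direction of the equivalence, which requires a careful construction of $(\xi,{\mathfrak P})$ inside the ramified tower $\Q(\xi)/\Q(\xi')$; all remaining steps are bookkeeping on divisibilities and orders modulo~$p$.
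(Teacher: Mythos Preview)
Your proof is correct and follows essentially the same route as the paper: the same contradiction $q<p$ and $p<q$ for the uniqueness of the prime in the gcd, and the identical construction of $(\xi,{\mathfrak P})$ in the ramified tower $\Q(\zeta\xi')/\Q(\xi')$ for the converse direction. The only difference is organizational---you prove the full equivalence first and then deduce the gcd statements, whereas the paper handles the gcd directly---and you explicitly invoke Lemma~\ref{lem3} for the valuation bound where the paper simply writes ``il est clair que ${\rm p.g.c.d.}\,(m,\Phi_m(a))=p$''.
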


\begin{proof} Si $p$ et $q$, $p\ne q$, sont des nombres premiers divisant $m$ et $\Phi_m(a)$, on a n\'ecessairement
$m = p^e q^f m''$, $e, f \geq 1$, avec $o_p(a) = q^f m'' \div p-1$ et $o_q(a) = p^e m'' \div q-1$, qui suppose $q<p$ et $p<q$ (absurde). 

\smallskip
Enfin montrons que tout $p$ premier et $e\geq 1$ conviennent pour $m=p^e .\, o_p(a)$. Comme $p\div \Phi_{o_p(a)}(a)$, on a
$a \equiv \xi' \pmod {{\mathfrak P}'}$ dans $\Q(\xi')$ ($\xi'$ d'ordre $o_p(a)$)~; donc pour toute racine $\zeta$ d'ordre $p^e$, 
et pour ${\mathfrak P} \div {\mathfrak P}'$ dans $\Q(\zeta\xi')$, on a $a \equiv \zeta\xi' \pmod {\mathfrak P}$
(d'o\`u le r\'esultat par le Lemme \ref{lem2}). Il est clair que ${\rm p.g.c.d.}\,(m,\Phi_m(a))=p$.
\end{proof} 

Nous r\'eserverons la notation $r$ au cas o\`u $m = r^e  .\, o_r(a)$, $e\geq 1$, car $r$ n'intervient pas 
pour le calcul des $p$-quotients de Fermat de $a$ pour $p \div \Phi_m(a)$. Autrement dit la consid\'eration de $p$
signifiera $p \div \Phi_m(a)$, $p \notdiv m$ (\'equivalent \`a $p\ne r$ si $m$ est de la forme pr\'ec\'edente avec $e\geq 1$).

\subsection{D\'efinition des nombres $\widetilde \Phi_m(a)$, $m\geq 1$}\label{subtild}
 On peut donc consid\'erer dans tous les cas $\widetilde \Phi_m(a) := \Frac{ \Phi_m(a)}{{\rm p.g.c.d.}\,( \Phi_m(a),m )}\  \ 
 \hbox{qui est \'egal \`a}\  \, \Phi_m (a) \ \,  \hbox{\rm ou \`a}\ \  \Frac{\Phi_{r^e .\, o_r(a)}(a)}{r}, \  e\geq 1$,

pour \'eliminer le facteur premier $r$ \'eventuel (ramifi\'e dans $\Q(\xi)/\Q$).
Dans le second cas $m = r^e  .\, o_r(a)$, $ e\geq 1$, si $p\ne r$ divise $\Phi_{m}(a)$, alors $m = o_p(a)$ et on a
$p\equiv 1 \pmod {r^e  .\, o_r(a)}$. 

\smallskip
Dans le cas o\`u ${\rm p.g.c.d.\,}( \Phi_m(a),m) = r$, la nullit\'e du
$r$-quotient de Fermat de $a$ est donn\'ee via $\Frac{\Phi_{o_r(a)}(a)}{r}$ en g\'en\'eral distinct des 
$\Frac{\Phi_{r^e .\, o_r(a)}(a)}{r}$ pour $e\geq 1$ puisque dans ce cas, et pour $r^e .\, o_r(a) \ne 2$, 
$\Phi_{r^e .\, o_r(a)}(a) \not\equiv 0 \pmod {r^2}$ (cf. Lemme \ref{lem3}).

Par exemple, pour $r=29$ et $a=14$ on a $o_{29}(a) = 28$, $\Frac{\Phi_{ {29} \, .\, 28}(a)}{29} =  F \not\equiv 0 \pmod {29}$ 
mais $\Frac{\Phi_{28}(a)}{29} = 29 \times F'$ (i.e., $q_{29}(14) = 0$).

Pour $m=2$ et $a$ impair, on a $r=2$ et $\widetilde \Phi_2(a) = \Frac{a+1}{2}$ qui peut
\^etre divisible par une puissance de 2 arbitraire contrairement au cas g\'en\'eral (cf. Lemme \ref{lem3}).

\subsection{D\'ecomposition en facteurs premiers de $\widetilde\Phi_m(a)$}\label{subdfp}

Soit $m \ne 2$~; d'apr\`es les r\'esultats pr\'ec\'edents, si l'on pose
$\widetilde \Phi_m(a) = \prd_{k=1}^g \ell_k^{n_k}$, $\ \,\ell_1 < \ell_2< \ldots < \ell_g$,\ \ $n_k \geq 1$, 
tous les premiers $\ell_k$ sont congrus \`a 1 modulo $m$ (car de degr\'e 1 et non rami\-fi\'es dans $\Q(\mu_m)/\Q$).
 Il en r\'esulte aussi que pour un tel $\ell =\ell_j$ 
(en posant $\ell -1 = t\,m$), $\ell $ est totalemment d\'ecompos\'e dans l'extension Galoisienne 
$\Q(\mu_{\ell-1})(\sqrt[t] a)/\Q$ puisque $a$ est localement 
de la forme $b^t$ modulo~$\ell$ ($\ell$~ne divise pas $a$ et n'est pas ramifi\'e dans cette extension).
Ces questions d'ordres modulo $\ell$ sont li\'ees \`a des techniques issues de la conjecture d'Artin
sur les racines primitives et de la d\'emonstration de Hooley, susceptibles de s'appliquer aux quotients 
de Fermat (voir \cite{Mo} pour un expos\'e exhaustif).

\begin{lemma}\label{dec1} On suppose $(m, p)$ distinct de $(2,2)$.
On a $p^2 \div \widetilde \Phi_m(a)$ si et seulement si $m=o_p(a)$ $\&$ $p^2\div  \Phi_{m}(a)$, donc
si et seulement si $m=o_p(a)$ $\&$ $q_p(a)=0$.
\end{lemma}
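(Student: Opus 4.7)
Le plan est d'exploiter directement les deux r\'esultats structurels qui pr\'ec\`edent~: le Th\'eor\`eme~\ref{theorem}, qui caract\'erise les couples $(m,p)$ tels que $p\div \Phi_m(a)$, et le Lemme~\ref{lem3}, qui borne la $r$-valuation de $\Phi_m(a)$ dans le cas o\`u $r$ ramifie. Le point crucial est de comparer, pour chaque $p$, la $p$-valuation de $\Phi_m(a)$ avec celle de $\widetilde\Phi_m(a) = \Phi_m(a)/{\rm p.g.c.d.\,}(\Phi_m(a),m)$, en distinguant selon que $p$ est ou non le \og premier ramifi\'e\fg\ $r$ du Th\'eor\`eme~\ref{theorem}.

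\emph{Sens direct.} Supposons $p^2 \div \widetilde\Phi_m(a)$. Comme $\widetilde\Phi_m(a)$ divise $\Phi_m(a)$, on a $p \div \Phi_m(a)$, et le Th\'eor\`eme~\ref{theorem} entra\^{\i}ne $m = p^e \cdot o_p(a)$ pour un certain $e\geq 0$. Si $e\geq 1$, alors $p$ divise \`a la fois $m$ et $\Phi_m(a)$, donc ${\rm p.g.c.d.\,}(\Phi_m(a),m) = p$ (i.e., $r=p$)~; comme $(m,p)\ne(2,2)$, le Lemme~\ref{lem3} donne $p^2 \notdiv \Phi_m(a)$, de sorte que la $p$-valuation de $\Phi_m(a)$ est \'egale \`a $1$, et celle de $\widetilde\Phi_m(a) = \Phi_m(a)/p$ est nulle~: contradiction. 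Donc $e=0$ et $m = o_p(a)$, ce qui entra\^{\i}ne $p\notdiv m$~; ainsi $r\ne p$ (ou bien ${\rm p.g.c.d.\,}(\Phi_m(a),m)=1$), et la $p$-valuation de $\widetilde\Phi_m(a)$ co\"{\i}ncide avec celle de $\Phi_m(a)$, qui est donc $\geq 2$.

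\emph{Sens r\'eciproque.} Supposons $m = o_p(a)$ et $p^2 \div \Phi_m(a)$. Alors $m \div p-1$, donc $p \notdiv m$, et a fortiori $p$ ne peut \^etre le premier $r$ du Th\'eor\`eme~\ref{theorem} (lequel, s'il existe, divise $m$). On en d\'eduit \`a nouveau $v_p(\widetilde\Phi_m(a)) = v_p(\Phi_m(a)) \geq 2$. L'\'equivalence avec $q_p(a) = 0$ est alors exactement la Remarque~\ref{rema1}.

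La seule v\'eritable difficult\'e r\'eside dans le sous-cas $e\geq 1$ o\`u $p$ ramifie dans $\Q(\xi)/\Q$~: c'est l\`a qu'intervient le Lemme~\ref{lem3} pour garantir $v_p(\Phi_m(a))=1$, et c'est aussi pr\'ecis\'ement la raison de l'exclusion de $(m,p)=(2,2)$, le Lemme~\ref{lem3} admettant dans ce cas une exception explicite ($\Phi_2(a) = a+1$ peut \^etre divisible par une puissance de $2$ arbitrairement grande).
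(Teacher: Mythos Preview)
Your proof is correct and follows essentially the same approach as the paper's: both hinge on the Th\'eor\`eme~\ref{theorem} (to write $m=p^e\,o_p(a)$) and the Lemme~\ref{lem3} (to exclude $e\geq 1$ via $v_p(\Phi_m(a))=1$), the exclusion of $(m,p)=(2,2)$ being exactly the exceptional case of that lemma. Your version is in fact slightly cleaner in the r\'eciproque, since you argue directly via $p$-valuations (noting $r\ne p$ suffices) rather than asserting $\widetilde\Phi_{o_p(a)}(a)=\Phi_{o_p(a)}(a)$, which need not hold literally when some $r\ne p$ divides ${\rm p.g.c.d.\,}(\Phi_m(a),m)$.
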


\begin{proof} 
En effet,  si $p^2\div  \Phi_{o_p (a)}(a)$, comme $p\div \Phi_{o_p (a)}(a)$ et $p \notdiv {o_p (a)}$,  on a
$\widetilde \Phi_{o_p (a)}(a) = \Phi_{o_p (a)}(a)$ et donc $p^2\div \widetilde \Phi_{m}(a)$.

R\'eciproquement, si $p^2\div \widetilde \Phi_m(a)$, on peut supposer que ${\rm p.g.c.d.}\,(\Phi_m(a), m)=r$
avec $m=r^e\, o_r (a)$, $e\geq 1$, sinon ${\rm p.g.c.d.}\,(\Phi_m(a), m)=1$, $\widetilde \Phi_m(a) = \Phi_m(a)$ et
n\'eces\-saire\-ment $m = {o_p (a)}$.
Ainsi $\widetilde \Phi_m(a) = \Frac{\Phi_m(a)}{r}$, donc
$p\notdiv m$ (i.e., $p\ne r$ car $r^2 \notdiv \Phi_m(a)$ par le Lemme \ref{lem3} qui exclue le cas $p^e=m=2$), 
d'o\`u $p^2 \div \Phi_m(a) = \Phi_{o_p (a)}(a)$.
\end{proof}

\begin{lemma}\label{dec2} 
Pour $a$ fix\'e, les $\widetilde\Phi_m(a)$, $m\geq 1$, sont premiers entre eux deux \`a deux. Pour tout $p\geq 2$ 
il existe un et un seul $m\geq 1$ (\'egal \`a $o_p (a)$), tel que $p \div \widetilde\Phi_m(a)$.
\end{lemma}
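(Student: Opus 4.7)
Le plan est de d\'eduire les deux assertions du Th\'eor\`eme \ref{theorem} combin\'e au Lemme \ref{lem3}. Le Th\'eor\`eme \ref{theorem} caract\'erise l'ensemble des $m\geq 1$ tels que $p\div \Phi_m(a)$ comme \'etant exactement $\{p^e\cdot o_p(a) : e\geq 0\}$, et le Lemme \ref{lem3} pr\'ecise qu'en dehors du cas exceptionnel $(p,m)=(2,2)$, d\`es que $e\geq 1$ la $p$-valuation de $\Phi_m(a)$ vaut exactement~$1$~; comme le Th\'eor\`eme \ref{theorem} donne ${\rm p.g.c.d.}(\Phi_m(a),m)=p$ dans cette situation, on en d\'eduit que $p\notdiv \widetilde\Phi_m(a)=\Phi_m(a)/p$.

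Je d\'emontrerais d'abord la seconde assertion (existence et unicit\'e de l'indice $m$ tel que $p\div \widetilde\Phi_m(a)$), dont la coprimalit\'e deux \`a deux se d\'eduira sans effort. Pour l'existence, on prend $m:=o_p(a)$~: alors $p\div\Phi_m(a)$ par la discussion de \S\ref{subfermat}, et $m\div p-1$ entra\^\i ne $p\notdiv m$, donc $p$ ne figure pas dans ${\rm p.g.c.d.}(\Phi_m(a),m)$ et $p\div\widetilde\Phi_m(a)$. Pour l'unicit\'e, si $p\div\widetilde\Phi_m(a)$ alors $p\div\Phi_m(a)$, et le Th\'eor\`eme \ref{theorem} impose $m=p^e\cdot o_p(a)$ avec $e\geq 0$~; le cas $e\geq 1$ est exclu par l'observation pr\'ec\'edente, d'o\`u $m=o_p(a)$.

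La coprimalit\'e deux \`a deux en r\'esulte aussit\^ot~: si un premier $p$ divisait \`a la fois $\widetilde\Phi_m(a)$ et $\widetilde\Phi_n(a)$ avec $m\ne n$, l'unicit\'e fournirait $m=o_p(a)=n$, ce qui est contradictoire. L'ordre naturel est donc (i) isoler le fait arithm\'etique $v_p(\widetilde\Phi_{p^e o_p(a)}(a))=0$ pour $e\geq 1$, (ii) en d\'eduire l'\'equivalence $p\div\widetilde\Phi_m(a)\Longleftrightarrow m=o_p(a)$, (iii) conclure \`a la coprimalit\'e.

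Le seul point v\'eritablement d\'elicat est l'exception $(p,m)=(2,2)$ du Lemme \ref{lem3}~: pour $a$ impair v\'erifiant $a\equiv 3\pmod{4}$, $\widetilde\Phi_1(a)=a-1$ et $\widetilde\Phi_2(a)=(a+1)/2$ sont tous deux pairs, et l'unicit\'e est mise en d\'efaut pour $p=2$ telle que formul\'ee. Cette situation pathologique doit donc soit \^etre exclue des hypoth\`eses (comme cela a \'et\'e fait explicitement dans le Lemme \ref{dec1} et signal\'e dans la remarque finale de \S\ref{subtild}), soit trait\'ee s\'epar\'ement~; en dehors d'elle, l'argument ci-dessus fonctionne uniform\'ement et ne requiert aucun calcul suppl\'ementaire.
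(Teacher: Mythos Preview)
Your strategy coincides with the paper's: both invoke Th\'eor\`eme~\ref{theorem} to force $m=p^e\cdot o_p(a)$ whenever $p\mid\Phi_m(a)$, and then use the normalisation $\widetilde\Phi_m(a)=\Phi_m(a)/r$ (with $r=p$ when $e\geq 1$) together with Lemme~\ref{lem3} to remove the factor~$p$ and conclude $e=0$. The paper phrases this as a direct coprimality argument (suppose $p$ divides two of them) rather than first establishing the equivalence $p\mid\widetilde\Phi_m(a)\Longleftrightarrow m=o_p(a)$ and deducing coprimality afterwards, but the logical content is the same.

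Your reservation about $p=2$ is not a mere technicality to be dispatched: it points to a genuine slip in the paper. In its treatment of $p=2$ the paper writes ``$\widetilde\Phi_1(a)=\frac{a-1}{2}$'', but the definition $\widetilde\Phi_m(a)=\Phi_m(a)/{\rm p.g.c.d.}(\Phi_m(a),m)$ gives $\widetilde\Phi_1(a)=a-1$, since ${\rm p.g.c.d.}(a-1,1)=1$. With the correct value your counterexample stands: for $a\equiv 3\pmod 4$ both $\widetilde\Phi_1(a)=a-1$ and $\widetilde\Phi_2(a)=(a+1)/2$ are even (e.g.\ $a=3$ yields $\widetilde\Phi_1(3)=\widetilde\Phi_2(3)=2$), so the lemma as stated fails at $p=2$. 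The intended repair is visibly to normalise $\Phi_1(a)$ by the factor~$2$ as well when $a$ is odd --- this is harmless for all later uses, since $q_2(a)=0\Leftrightarrow 4\mid a-1$ --- but that is not what \S\ref{subtild} actually defines. You are right to isolate this case.
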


\begin{proof} Si $p\ne 2$ divise $\widetilde\Phi_m(a)$ et $\widetilde\Phi_{m'}(a)$, d'apr\`es le Th\'eor\`eme \ref{theorem} 
on a $m=p^e o_p (a)$ et $m'=p^{e'} o_p (a)$, $e, e' \geq 0$. Si par exemple $e\geq 1$, on a $p=r$ (absurde car $r^2$
ne divise pas $\widetilde\Phi_m(a)$)~; donc $e=e'=0$ et $m=m'$.

\smallskip
Si $p=2$, on obtient encore $m=2^e$, $m'=2^{e'}$, $e, e' \geq 0$~; 
le cas $e$ ou $e'$ $\geq 2$ \'etant impossible car alors $\widetilde\Phi_m(a)$ ou $\widetilde\Phi_{m'}(a)$ est
impair, il reste par exemple le cas $e=1$, $e'=0$, mais alors $\widetilde\Phi_2(a) = \frac{a+1}{2}$ et $\widetilde\Phi_1(a) = \frac{a-1}{2}$
qui ne peuvent \^etre tous deux divisibles par 2. Enfin tout $p$ divise $\Phi_{o_p (a)}(a)=\widetilde\Phi_{o_p (a)}(a)$. 
\end{proof}

En r\'esum\'e on a obtenu l'\'equivalence, plus forte que $q_p(a)=0 \Longleftrightarrow p^2 \div \Phi_{o_p (a)}(a)$~:

\begin{theorem} \label{wtheorem} Soit  $a \in \N \Sauf \{0, 1\}$ et soit $p$ premier. Alors $q_p(a)=0$ si et seulement 
si $p^2$ divise $\widetilde \Phi_{o_p (a)}(a)$.
\end{theorem}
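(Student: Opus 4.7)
My plan is to reduce the theorem to Lemma \ref{dec1} specialised at $m = o_p(a)$. By Remark \ref{rema1}, $q_p(a) = 0$ is equivalent to $p^2 \mid \Phi_{o_p(a)}(a)$; Lemma \ref{dec1} asserts, for $(m,p) \neq (2,2)$, that $p^2 \mid \widetilde{\Phi}_m(a)$ is equivalent to the conjunction of $m = o_p(a)$ and $q_p(a) = 0$. Setting $m = o_p(a)$ makes the first condition tautological and yields precisely the desired biconditional.

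The only point to verify is that the exclusion $(m,p) \neq (2,2)$ of Lemma \ref{dec1} is compatible with the choice $m = o_p(a)$. This is immediate from the fact that $o_p(a) \mid p-1$, hence $p \nmid o_p(a)$: for $p = 2$ we necessarily have $o_2(a) = 1 \neq 2$, and for $p \geq 3$ there is nothing to check. So Lemma \ref{dec1} applies unconditionally in our specialised setting.

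For a proof that does not invoke Lemma \ref{dec1} directly, I would argue as follows. Since $p \nmid o_p(a)$, Theorem \ref{theorem} implies that $\gcd(\Phi_{o_p(a)}(a), o_p(a))$ is either $1$ or a prime $r$ distinct from $p$; in either case it is coprime to $p$. The division $\widetilde{\Phi}_{o_p(a)}(a) = \Phi_{o_p(a)}(a)/\gcd(\Phi_{o_p(a)}(a), o_p(a))$ therefore preserves the $p$-adic valuation, so $p^2 \mid \widetilde{\Phi}_{o_p(a)}(a)$ is equivalent to $p^2 \mid \Phi_{o_p(a)}(a)$, and the theorem follows from Remark \ref{rema1}. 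There is no substantive obstacle; the theorem merely packages the preceding lemmas into a statement about $\widetilde{\Phi}$ alone, the role of the tilde being precisely to absorb the ramified prime factor without affecting the $p$-adic behaviour that detects the Fermat quotient.
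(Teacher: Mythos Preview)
Your proposal is correct and follows exactly the paper's intended route: the theorem is stated immediately after Lemmas \ref{dec1} and \ref{dec2} with the words ``En r\'esum\'e on a obtenu l'\'equivalence'', i.e.\ it is simply Lemma \ref{dec1} specialised at $m=o_p(a)$, together with the observation that $o_p(a)\mid p-1$ rules out the exceptional case $(m,p)=(2,2)$. Your alternative argument via Theorem \ref{theorem} (showing directly that the gcd removed in passing from $\Phi$ to $\widetilde\Phi$ is prime to $p$) is also sound and amounts to re-proving the relevant direction of Lemma \ref{dec1} in this special case.
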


Ainsi, la recherche des quotients de Fermat nuls est de nature
multiplicative, a priori diff\'erente de  celle des quotients de Fermat $1, 2, \ldots , p-1$~: si 
$\widetilde \Phi_m(a) = \prod_{k=1}^g \ell_k^{n_k}$,
le cas $q_{\ell_j}(a)=0$ se lit sur l'exposant $n_j$ tandis que si $n_j=0$ on a
 $\Frac{\widetilde  \Phi_{m}(a)}{\ell_j} = \prod_{k\ne j} \ell_k^{n_k}$ qui relie $q_{\ell_j}(a)$ au produit $\prod_{k\ne j} \ell_k^{n_k}$
 au moyen d'une congruence modulo $\ell_j$ convenable.
 
\subsection{Premi\`ere approche des questions de probabilit\'es}\label{manques}

 Pour chacun des cas $q_p(a)=u \in [0, p[$, la probabilit\'e est a priori voisine de $\frac{1}{p}$. 
 Des probabilit\'es inf\'erieures \`a $\frac{1}{p}$ en moyenne pour $u=0$ ne sont pas contradictoires avec une somme \'egale \`a 1 
 car une \'etude num\'erique montrera qu'environ $\frac{1}{3}$ des $u \in [0, p[$ ne sont pas de la forme $q_p(z)$, $z\in [2, p[$ 
 (pour $p=11$, on trouve que $u=3,6,8,9$  ne sont pas atteints). Par exemple, lorsque $a \ll p$ 
 ($a$ fix\'e) a un $p$-quotient de Fermat nul, alors tout $b \geq 2$
 tel que $a\,b<p$ v\'erifie $q_p(a\,b)=q_p(b)$, ce qui montre une ``non surjectivit\'e'' \'evidente.
Pour $p=1093$ et $p=3511$ ($q_p(2) = 0$), on obtient les proportions de $0.60348$ et $0.60285$,
 respectivement, de $u$ non atteints. 
  
\smallskip 
 On peut utiliser le programme suivant pour d'autres exp\'erimentations~:
 
 \smallskip
 \footnotesize
 $\{$$p=10^3; while(p<10^3+100, p=nextprime(p+1); P=0; p2=p^2; N=0.0; $\par$
 for(a=1, p-1, Q=Mod(a,p2)^{(p-1)}-1;$\par$
q=component(Q,2)/p; P=P+x^q); for(k=1,p, u=component(P,k);$\par$
if(u==0, N=N+1) ); print(p,"   ",N/(p-1)) )$$\}$\ \footnote{\,Dans tous les programmes PARI \cite{P} propos\'es, la compatibilit\'e avec TeX oblige \`a \'ecrire les symboles {\it par}, $\&$ avec un antislash, \`a placer  des {\it \$} et des {\it  \{  \}} pour les exposants\ldots
Sous r\'eserve d'\'eliminer ces symboles, le fichier tex permet de copier-coller ces programmes.}

\normalsize
 \smallskip
 En outre le cadre probabiliste pr\'ec\'edent de recherche des solutions $z \in [2, p[$ est plut\^ot de type ``densit\'e'' sur un l'intervalle
 tendant vers l'infini avec $p$~; or on verra au \S\,\ref{probas} que ces deux cas de figure sont \`a distinguer soigneusement.  
 
 \smallskip
 L'aspect chaotique de ces estimations invite \`a faire des statistiques cumul\'ees~: $a\geq 2$ et $u$ (en g\'en\'eral $0$) sont fix\'es 
 mais on teste plusieurs $p$, par exemple une dizaine, pour lisser le ph\'enom\`ene puisque, pour un seul
 $p$, plusieurs valeurs inconnues de $q_p(z)$, $z\in [2, p[$, sont de probabilit\'e nulle et d'autres multiples de $\frac{1}{p}$.

 \smallskip
Les cas o\`u $\widetilde \Phi_m(a)$ est divisible par le carr\'e d'un nombre premier $p$ sont rarissimes.
Rappelons cependant les toutes premi\`eres valeurs $(a, p)$ pour lesquelles $q_p(a)=0$, qui correspondent le plus souvent \`a des cas triviaux comme $p=2$ et $a\equiv 1 \pmod 4$, $p=3$ et $a\equiv 1, 8 \pmod 9$~:

\medskip
\footnotesize
$\{$$for(a=2, 14, p=0; while(p<100, p=nextprime(p+1); p2=p^2;$\par$
 Q= Mod(a,p2)^{(p-1)}-1; if(Q==0, print(a,"   ",p)) )) $$\}$

\medskip
 \normalsize
\centerline{ $(a, p) =  (3,   11); (5,   2); (7,   5); (8,   3); (9,   2); (9,   11); (10 ,  3); (11 ,  71); (13,   2); (14 ,  29)$. }

\begin{remark}\label{rema3} On utilise $\widetilde \Phi_m(a)$ au lieu de $\Phi_m(a)$ car en raison du nombre premier $r$
\'eventuel, les valeurs $\Phi_m(a)$ sont trivialement non premi\`eres entre elles
(pour les $m$ de la forme  $r^e .\,  o_r(a)$, $e = 0, 1, \ldots$)~; donc on ne peut pas \'etudier les facteurs carr\'es du produit formel
${\mathcal P}(a) := \prod_{m\geq 1} \Phi_m(a)$ qui contient pour chaque $r$ les sous-produits $\prod_{e\geq 1} \Phi_{r^e .\,  o_r(a)}(a)$ 
et donc les facteurs parasites $r^\infty$, ce qui n'est plus le cas de $\widetilde{\mathcal P}(a) :=\prod_{m\geq 1} \widetilde\Phi_m(a)$.
\end{remark}

\section{Premi\`ere analyse probabiliste pour $q_p(a) =0$} \label{section3}

\subsection{R\'esultat de A. Granville \cite{G}}\label{subrag} Ce r\'esultat a \'et\'e obtenu, dans le cas le plus g\'en\'e\-ral,
sous la conjecture $ABC$.
Soit $f\in \Z[x]$ un polyn\^ome tel que l'ensemble des $f(n)$, $n \in \Z$, ait un plus grand commun diviseur \'egal \`a 1
(le cas plus complet \'enonc\'e dans  \cite{G} ne s'applique pas pour nous). 

\begin{proposition} \label{gran}
La densit\'e naturelle des entiers $A \in \N$ tels que $f(A)$ est sans facteur carr\'e non trivial est donn\'ee par l'expression~:
$$\prd_{p \,{\rm premier}\, \geq 2}\  \Big( 1 - \Frac{c_p}{p^2} \Big) , \ \hbox{o\`u 
$c_p =\Big \vert\, \Big\{ b \in [0, p^2[,\ \, f(b) \equiv 0 \!\!\pmod {p^2} \Big\}\, \Big\vert$}, $$
chaque facteur $1 - \Frac{c_p}{p^2}$ \'etant la densit\'e (dite densit\'e locale associ\'ee \`a $p$)  des 
$A \in \N$ tels que $p^2 \!\notdiv f(A)$. Dans le cas local, la densit\'e des $A \in \N$ tels que $p^2\!\div f(A)$
\'etant~$\Frac{c_p}{p^2}$.
\end{proposition}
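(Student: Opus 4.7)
Le plan consiste \`a appliquer un crible classique en d\'ecoupant les premiers $p$ en trois plages --- petits, moyens, grands --- la conjecture $ABC$ n'intervenant que pour la derni\`ere. L'observation de d\'epart est que la condition $p^2 \div f(A)$ ne d\'epend que de $A$ modulo~$p^2$, de sorte que pour chaque $p$ fix\'e on a $\vert \{A \in [1, N],\ p^2 \div f(A)\}\vert = \frac{c_p}{p^2}\, N + O(c_p)$~; ceci justifie imm\'ediatement l'expression de la densit\'e locale annonc\'ee. De plus, pour $p$ suffisamment grand par rapport au discriminant de $f$, le lemme de Hensel fournit $c_p \leq \deg f$, ce qui assure la convergence absolue du produit infini $\prd_p \big(1 - \frac{c_p}{p^2}\big)$ et donne un sens \`a la limite vis\'ee.

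\'Etant donn\'e un param\`etre $Y = Y(N)$ tendant vers l'infini tr\`es lentement avec $N$, je traiterais les trois plages comme suit. Pour les \emph{petits premiers} $p\leq Y$, le th\'eor\`eme chinois appliqu\'e au module $\prd_{p\leq Y} p^2$ fournit directement
$$\vert \{A \in [1, N] : p^2 \notdiv f(A) \ \forall p\leq Y\}\vert = N \prd_{p\leq Y}\Big(1- \Frac{c_p}{p^2}\Big) + O_Y(1).$$
Pour les \emph{premiers moyens} $Y < p \leq \sqrt{N}$, la majoration brutale $c_p\,N/p^2 + c_p$ du nombre de $A\in[1,N]$ avec $p^2\div f(A)$, jointe \`a $c_p=O(1)$, donne une contribution totale en $O(N/Y) = o(N)$. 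Pour les \emph{grands premiers} $\sqrt{N} < p$ avec $p^2 \leq \max_{A\leq N} |f(A)|$, c'est l\`a qu'intervient la conjecture $ABC$~: appliqu\'ee \`a la factorisation $f(A) = p^2 B$, elle impose \`a $A$ d'\^etre tr\`es proche d'une racine de $f$ au sens $p$-adique, et permet de montrer que le nombre de tels couples $(A, p)$, somm\'e sur toute la plage, reste $o(N)$.

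En combinant les trois \'etapes et en faisant tendre $N\to \infty$ puis $Y\to\infty$ (cet ordre est essentiel pour rendre les termes d'erreur n\'egligeables), on r\'ecup\`ere la densit\'e annonc\'ee $\prd_p \big(1- \frac{c_p}{p^2}\big)$. Le point d\'elicat, et l'unique endroit o\`u un ingr\'edient non trivial est requis, est le traitement des grands premiers~: sans la conjecture $ABC$, on ne sait pas \'eliminer la contribution possible de facteurs carr\'es $p^2$ dans la plage $\sqrt{N} < p \leq (\max_{A\leq N} |f(A)|)^{1/2}$, o\`u le crible \'el\'ementaire devient inop\'erant. Toute la force du r\'esultat de \cite{G} est donc concentr\'ee dans la r\'esolution de cette plage haute via $ABC$, les \'etapes~1 \`a~3 \'etant des manipulations classiques du crible.
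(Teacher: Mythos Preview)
The paper does not supply a proof of this proposition: it is stated without proof in \S\ref{subrag} as a citation of Granville's result \cite{G}, obtained under the $ABC$ conjecture, and the text moves directly to the computation of the coefficients $c_p$ for the cyclotomic polynomials. There is therefore no argument in the paper to compare your attempt against.

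That said, your sketch is a faithful outline of the strategy in \cite{G}: splitting the primes into small, medium, and large ranges, handling the small range by the Chinese remainder theorem, the medium range by the trivial bound $c_p = O(1)$ coming from Hensel's lemma, and invoking $ABC$ only for the large range $p > \sqrt{N}$ where the elementary sieve fails. One point to be more careful about: the way you describe the large-prime step (``$ABC$ applied to the factorisation $f(A) = p^2 B$ forces $A$ to be $p$-adically close to a root of $f$'') is not quite how Granville proceeds; the actual argument goes via Bely\u{\i}'s theorem (or, for $\deg f \le 3$, direct manipulations) to reduce to an $ABC$-type inequality, and the conclusion is a bound on the number of $A\le N$ for which $f(A)$ has a large square factor, not a $p$-adic localisation statement. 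Your high-level picture is correct, but the mechanism in the large range is more delicate than your one-sentence summary suggests.
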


D'une certaine mani\`ere on peut dire que les \'ev\'enements $p^2 \notdiv f(A)$ sont ind\'ependants par rapport \`a $p$.

\subsection{Calcul des coefficients $c_p$ pour les polyn\^omes $\Phi_m(x)$, $m\geq 1$}\label{subcp}
Le p.g.c.d. des $\Phi_m(n)$, $n\in \Z$, est \'egal \`a 1 car $\Phi_m(0) = \pm 1$ puisque toute racine de l'unit\'e est de norme $\pm 1$.

\smallskip
Comme $\Phi_m(0) = \pm 1$, on a pour tout $p$ premier,
$$c_p =\Big \vert\, \Big\{ A \in [1, p^2[,\ \, \Phi_m(A) \equiv 0 \pmod {p^2} \Big\}\, \Big\vert. $$

\begin{proposition}\label{cp} Si $p \geq 2$ ne divise pas $m$, on a  $c_p = 0$ pour les $p\not\equiv 1 \pmod m$ et 
$c_p = \varphi(m)$ pour les $p \equiv 1 \pmod m$, o\`u $\varphi$ est l'indicateur d'Euler.

\smallskip
Si $m= p^e m'$, $e \geq 1$,  $p\notdiv m'$, on a $c_p =0$ sauf si $m=2$, auquel cas $c_2=1$.
\end{proposition}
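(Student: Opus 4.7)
Ma démarche consiste à distinguer les deux cas selon la relation entre $p$ et $m$, en tirant parti des Lemmes \ref{lem2} et \ref{lem3} déjà établis.

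Pour $p\notdiv m$, j'invoquerais d'abord le Lemme \ref{lem2}~: la congruence $\Phi_m(A)\equiv 0 \pmod{p^2}$ équivaut à l'existence d'un couple $(\xi,\mathfrak{P})$, unique à conjugaison galoisienne près, tel que $A\equiv \xi \pmod{\mathfrak{P}^2}$, $\xi$ étant racine primitive $m$-ième et $\mathfrak{P}$ un idéal de $\Z[\xi]$ au-dessus de $p$ de degré résiduel~$1$. Or, $p$ étant non ramifié dans $\Q(\mu_m)/\Q$, son degré résiduel commun vaut l'ordre de $p$ dans $(\Z/m\Z)^\times$~; si $p\not\equiv 1\pmod m$ cet ordre est $>1$ et aucun idéal premier au-dessus de $p$ n'est de degré résiduel~$1$, d'où $c_p=0$.

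Dans le sous-cas $p\equiv 1\pmod m$, $p$ est totalement décomposé en $\varphi(m)$ idéaux premiers de degré~$1$. Fixant l'un d'eux, $\mathfrak{P}$, l'inclusion $\Z\hookrightarrow\Z[\xi]$ induit un isomorphisme $\Z/p^2\Z\xrightarrow{\sim}\Z[\xi]/\mathfrak{P}^2$ (indice de ramification et degré résiduel égaux à $1$)~; pour chacune des $\varphi(m)$ racines primitives $\xi^t$, il existe donc un unique $A_t\in[0,p^2[$ vérifiant $A_t\equiv \xi^t\pmod{\mathfrak{P}^2}$, et ces $\varphi(m)$ valeurs sont deux à deux distinctes car les $\xi^t$ le sont déjà modulo $\mathfrak{P}$. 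Pour voir qu'on obtient ainsi toutes les solutions, j'appliquerais à un couple alternatif $(\xi',\mathfrak{P}')$ un automorphisme galoisien $\sigma$ envoyant $\mathfrak{P}'$ sur $\mathfrak{P}$~: puisque $A'=\sigma(A')\equiv\sigma(\xi')\pmod{\mathfrak{P}^2}$, le couple $(\sigma(\xi'),\mathfrak{P})$ entre dans le décompte précédent. D'où $c_p=\varphi(m)$.

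Le cas $m=p^e m'$, $e\geq 1$, $p\notdiv m'$, découle directement du Lemme \ref{lem3}~: dès que $(m,p)\ne(2,2)$ on a $\Phi_m(A)\not\equiv 0\pmod{p^2}$ pour tout $A$, donc $c_p=0$~; pour $m=p=2$, $\Phi_2(A)=A+1$ admet pour unique racine modulo~$4$ la valeur $A=3$, d'où $c_2=1$.

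Le point le plus délicat est l'argument de dénombrement du sous-cas $p\equiv 1\pmod m$~: il faut exploiter correctement l'unicité à conjugaison près fournie par le Lemme \ref{lem2} pour éviter tout double comptage, en identifiant les solutions $A$ aux orbites galoisiennes des $\varphi(m)^2$ couples $(\xi,\mathfrak{P})$, orbites de cardinal $\varphi(m)$ (l'action étant libre puisque le stabilisateur d'une racine primitive est trivial dans $\mathrm{Gal}(\Q(\mu_m)/\Q)$), soit $\varphi(m)$ solutions $A$.
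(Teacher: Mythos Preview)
Your proof is correct, but it follows a different route from the paper's own argument in the key sub-case $p\nmid m$, $p\equiv 1\pmod m$. The paper proceeds entirely in $\Z$ by an elementary Hensel-type lift: it first observes that $\Phi_m(A)\equiv 0\pmod p$ is equivalent to $o_p(A)=m$, which gives exactly $\varphi(m)$ solutions $A_i\in[1,p[$; it then shows each $A_i$ lifts uniquely to $[1,p^2[$ by checking that $\Phi'_m(A_i)\not\equiv 0\pmod p$ (via the identity $mx^{m-1}=\Phi'_m(x)Q(x)+\Phi_m(x)Q'(x)$ coming from $x^m-1=\Phi_m(x)Q(x)$). You instead reuse Lemma~\ref{lem2} at level $h=2$ and invoke the ring isomorphism $\Z/p^2\Z\simeq\Z[\xi]/\mathfrak{P}^2$ together with a Galois-orbit count on the pairs $(\xi,\mathfrak{P})$. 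Your approach exploits the machinery already set up in \S\ref{subcyclo} and makes the bijection between solutions and primitive $m$-th roots completely explicit; the paper's approach is more self-contained (no reference to $\Z[\xi]$ needed at this point) and yields the simple-root property of $\Phi_m$ modulo $p$ as a byproduct. For the remaining cases ($p\not\equiv 1\pmod m$ and $p\mid m$) the two arguments are essentially the same.
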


\begin{proof} (i) Cas $p\notdiv m$.
Dans ce cas, la congruence $\Phi_m(A) \equiv 0 \pmod p$ est \'equivalente 
\`a $m=o_p(A)$ et on a $p\equiv 1 \pmod m$~; donc pour $p\notdiv m$, il y a exactement
$\varphi(m)$ nombres distincts $A_i \in [1, p[$ pour lesquels $\Phi_m(A_i) \equiv 0 \pmod p$. 

\smallskip
Consid\'erons pour $i$ fix\'e les entiers de la forme $A = A_i + \lambda_i \, p \in  [1, p^2[$ (i.e., $\lambda_i \in [0, p[$). On a
$\Phi_m(A) \equiv  \Phi_m(A_i)  + \lambda_i \, p \,\Phi'_m(A_i) \pmod {p^2}$, o\`u $\Phi'_m$ est le polyn\^ome d\'eriv\'e 
de $\Phi_m$~; d\`es que $\Phi'_m(A_i) \not \equiv 0 \pmod p$, il existe un unique $\lambda_i$ modulo $p$ 
donnant $\Phi_m(A) \equiv 0 \pmod {p^2}$ et dans ce cas, $c_p = \varphi(m)$.

\smallskip
Montrons que $\Phi'_m(A_i) \not \equiv 0 \pmod p$. On a $x^m - 1 = \Phi_m(x) \times Q(x)$, $Q \in \Z[x]$~; d'o\`u
$m\,x^{m-1} = \Phi'_m(x)  \times Q(x) + \Phi_m(x) \times Q'(x)$. Si $\Phi'_m(A_i) \equiv 0 \pmod p$ il vient
$m\,A_i^{m-1}  \equiv 0 \pmod p$~; comme $p\notdiv A_i$ par hypoth\`ese, on a $m \equiv 0 \pmod p$ (absurde).

\medskip
(ii) Cas o\`u $p=r\div m$. D'apr\`es le Lemme \ref{lem3}, $m=r^e  .\, o_r(A)$, $e\geq 1$, et $\Phi_m(A) \equiv 0 \pmod {r^2}$ 
n'a pas de solutions sauf si $m=2$, auquel cas $c_2=1$.
\end{proof} 

\subsection{Densit\'es et Probabilit\'es}\label{subdp}
De fa\c con g\'en\'erale, $A\in \N$ d\'esigne une variable et $F(A)$ une propri\'et\'e. On appelle alors densit\'e naturelle
(ou, pour simplifier, densit\'e) la limite (si elle existe),
$\lim\limits_{\substack{\ \ y \to \infty}}  \, \Frac{1}{y} \Big \vert \Big\{  A  \leq y,\ \,  F(A)  \Big\} \Big \vert$
(cf. \cite{T1}, III.1.1).

\smallskip
Si $F = F_p$ est la propri\'et\'e locale $p^2 \div f(A)$, la densit\'e est celle donn\'ee dans la Proposition \ref{gran}, \'egale \`a
$\Frac{c_p}{p^2}$ (celle de $p^2 \notdiv f(A)$ \'etant $1-\Frac{c_p}{p^2}$). Dans ce cadre, la densit\'e est relative \`a tous les entiers
(y compris ceux divisibles par $p$). 
Dans $\N\Sauf p\N$ ces densit\'es deviennent respectivement $\Frac{c_p}{p\,(p-1)}$ et $1- \Frac{c_p}{p\,(p-1)}$.

\smallskip
 Il faut distinguer la notion de densit\'e, relative \`a la propri\'et\'e~:

\medskip
\centerline {\it pour $p$ fix\'e,  $p^2 \div f(A)$ pour $A \in \N$ variant arbitrairement,}

\medskip
de celle de probabilit\'e d\'efinissant l'\'evenement~:

\medskip
\centerline {\it pour $a$ fix\'e, $p^2 \div f(a)$ pour $p$ premier variant arbitrairement}

\medskip
(cas de l'\'etude de $q_p(a)=0$ \'equivalent \`a
$p^2 \div \widetilde \Phi_{o_p(a)}(a)$, $p \notdiv a$ (Th\'eor\`eme \ref{wtheorem})).

\medskip
Analysons sur des cas pr\'ecis ce qu'il en est~; soit $d \div p-1$ un ordre fix\'e.

\smallskip
Si $p=2$ et $d=1$, $\Phi_1(x)= x-1$ et  la densit\'e des $A$ tels que $A-1 \equiv 0 \pmod 4$ est trivialement
$\frac {\varphi(1)}{p^2} = \frac{1}{4}$ (resp.  $\frac {\varphi(1)}{p\,(p-1)} =\frac{1}{2}$ pour les $A$ impairs). 
Ici l'ordre de grandeur de $a$ ne joue pas encore, mais si l'on veut par exemple $a<p$, la seule solution est $a=1$.

\smallskip
 Le cas $p=3$ est plus \'eloquent car pour $d=1$, la densit\'e des $A$ tels que $A-1 \equiv 0 \pmod 9$ est trivialement 
$\frac{1}{9}$ (resp. $\frac{1}{6}$ pour les $A$ \'etrangers \`a 3) et celle correspondant \`a $d=2$ (i.e., $\Phi_2(x)=x+1$)  
est aussi $\frac{1}{9}$(resp. $\frac{1}{6}$)~; puisque $A\not\equiv 0 \pmod 3$ peut \^etre d'ordre 1 ou 2 modulo 3,
 la densit\'e totale pour $q_3(A)=0$ est $\frac{2}{9}$ (resp. $\frac{1}{3}$). 

\smallskip
Par contre pour $a$ fix\'e non divisible par 3, le cas $a-1 \equiv 0 \pmod 9$ se produit une fois (solution minimale $a=1$) 
et le cas $a+1 \equiv 0 \pmod 9$ \'egalement, mais avec l'unique solution minimale $a=8$~; or si $a$ \'etait fix\'e
``assez petit'', la probabilit\'e correspondante chute ou si l'on pr\'ef\`ere, la probabilit\'e pour $q_3(a)$
d'\^etre non nul augmente. Le cas $a+1 \equiv 0 \pmod 9$ n'est donc plus envisageable avec une probabilit\'e 
\'egale \`a sa densit\'e $\frac{1}{6}$. Au total la probabilit\'e pour que $q_3(a)=0$ n'est plus la densit\'e totale 
$\frac{1}{6}+\frac{1}{6}=\frac{1}{3}$ (selon que $o_3(a) = 1$ ou $2$).

\smallskip
Pour $p=7$, on trouve, pour $A \in [1, 7^2[$, $A \not\equiv 0 \pmod 7$, les solutions
suivantes \`a $p^2 \div \widetilde \Phi_{d}(A)$, selon l'ordre $d$ modulo $p$ consid\'er\'e~: 

\smallskip
\centerline{$A = 1$\! ($d = 1$),  $\ A = 48$\! ($d = 2)$,  $\ A = 18, 30$\! ($d = 3$), $\ A =  19, 31$\! ($d = 6$). } 

\medskip
Pour $p=101$, on trouve de m\^eme~:

\smallskip
$A = 1$\! ($d = 1$),  $\ A = 181$\! ($d = 25)$,  $\ A = 248$\! ($d = 100$), \ldots, 

\hfill$\ A =  10020$\! ($d = 50$), $\ A =  10200$\! ($d = 2$). 

\medskip
On voit bien que si $a$ est fix\'e assez petit lorsque $p$ varie de fa\c con arbitraire, la probabilit\'e
de divisibilit\'e de $\widetilde \Phi_{d}(a)$ par $p^2$ peut m\^eme \^etre tr\`es faible. 

\smallskip
Pour simplifier, nous parlerons par abus de probabilit\'es lorsque $a$ est fix\'e, et nous \'ecrirons 
${\rm Prob}\big( f(a)\ {\rm s.f.c.} \big)$ et ${\rm Prob}\big(p^2 \notdiv  f(a)\big)$ respectivement, puis
${\rm Prob}\big( q_p(a)=0 \big)$, ${\rm Prob}\big( q_p(a)\ne0 \big)$, etc. 

\smallskip
A partir de ce principe et de ces observations num\'eriques, nous examinerons dif\-f\'erentes heuristiques en partant des plus faibles 
(permettant encore l'infinitude des $q_p(a)$ nuls) pour aller vers les plus fortes associ\'ees \`a la finitude des $q_p(a)$ nuls.

\smallskip
On peut donc d\'ej\`a admettre la premi\`ere heuristique g\'en\'erale suivante~:

\begin{heuristic} \label{heur1} {\it 
Supposons que pour $A\in \N$ (resp. $A \in \N\, \Sauf \, p \N$), la propri\'et\'e ``globale'' $F(A)$
(resp. la propri\'et\'e ``locale''  $F_p(A)$) soit du type $f(A)$ a un facteur carr\'e (resp. $p^2 \div f(A)$), $f\in \Z[X]$.
Alors la densit\'e correspondante dans $\N$ (resp. $\N\, \Sauf \, p \N$)
est un  {\it majorant} de ${\rm Prob} \big (F(a) \big)$ (resp. ${\rm Prob} \big (F_p(a) \big)$ pour $a$ fix\'e.}
\end{heuristic}

Par exemple, les densit\'es locales $\Frac{\varphi(d)}{p\,(p-1)}$, caract\'erisant la propri\'et\'e $F_p(A)$ d\'efinie par 
$p^2 \div \widetilde \Phi_{d}(A)$ pour les $A$ d'ordre $d \div p-1$, sont des {\it majorants} 
de ${\rm Prob} \big (q_p(a) = 0 \big)$ pour $a$ fix\'e de m\^eme ordre $d$ ($a, A \in \N\, \Sauf \, p \N$).
Ceci sera utilis\'e au \S\,\ref{probas}.

\smallskip
La Proposition \ref{cp} a la cons\'equence suivante concernant la densit\'e globale (on rappelle que 
$\widetilde\Phi_m(A)= \Phi_m(A)$ si ${\rm p.g.c.d.}\,(\Phi_m(A), m) = 1$, ou $\widetilde\Phi_m(A)= \frac{\Phi_{r^e .\, o_r(A)}(A)}{r}$ 
sinon, pour un unique nombre premier~$r$ et $e\geq 1$)~:

\begin{corollary}\label{coro1} Pour tout $m\ne 2$, la densit\'e des $A\in \N$ tels que $\widetilde\Phi_m(A)$ 
est sans facteur carr\'e non trivial est $\!\!\!\prd_{p \equiv 1\, ({\rm mod}\, m)} \Big( 1 - \Frac{\varphi(m)}{p^2} \Big)$.
Pour $m= 2$, la densit\'e des $\widetilde\Phi_2(A)= A+ 1$ ou $\frac{1}{2}(A+ 1)$ sans facteur carr\'e est
$\prd_{p \geq 2} \big( 1 - \Frac{1}{p^2} \big) = \Frac{6}{\pi^2} \approx 0.6$.
\end{corollary}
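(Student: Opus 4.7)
Le plan consiste à appliquer la Proposition~\ref{gran} à $f = \Phi_m$, puis à traduire le résultat en termes de $\widetilde\Phi_m$ grâce au Lemme~\ref{lem3}.

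D'abord, l'hypothèse ${\rm p.g.c.d.\,}\{\Phi_m(n) : n\in \Z\} = 1$ requise par la Proposition~\ref{gran} est immédiatement vérifiée, car le terme constant de tout polynôme cyclotomique vaut $\pm 1$ (en tant que produit de racines primitives de l'unité, qui sont des unités dans $\Z[\xi]$). On en déduit que la densité des $A \in \N$ tels que $\Phi_m(A)$ est sans facteur carré vaut $\prd_p\big(1 - c_p/p^2\big)$, où $p$ parcourt les nombres premiers.

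Pour $m \ne 2$, on substitue les valeurs de $c_p$ fournies par la Proposition~\ref{cp}: seuls les premiers $p \equiv 1 \pmod m$ avec $p \notdiv m$ contribuent un facteur non trivial $1 - \varphi(m)/p^2$, les autres donnant $c_p = 0$ (y compris les premiers $p \div m$, par application du Lemme~\ref{lem3}). Le produit se réduit donc à $\prd_{p \equiv 1 \pmod m}(1 - \varphi(m)/p^2)$, qui est la densité des $A$ tels que $\Phi_m(A)$ est sans facteur carré. Il reste à passer de $\Phi_m$ à $\widetilde\Phi_m$: si $r$ désigne l'unique premier éventuel divisant ${\rm p.g.c.d.\,}(\Phi_m(A), m)$, le Lemme~\ref{lem3} garantit que $r^2 \notdiv \Phi_m(A)$ pour $m \ne 2$. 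Par conséquent $\widetilde\Phi_m(A) = \Phi_m(A)/r$ est premier à $r$, et les deux entiers $\Phi_m(A)$ et $\widetilde\Phi_m(A)$ ont simultanément la propriété d'être sans facteur carré. Ceci établit l'énoncé pour $m \ne 2$.

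Pour $m = 2$, on applique directement la Proposition~\ref{gran} à $\Phi_2(x) = x+1$: on a $c_p = 1$ pour tout premier $p$ (unique solution $A = p^2 - 1$ modulo $p^2$), d'où la densité $\prd_p(1 - 1/p^2) = 6/\pi^2$ pour $A+1$ sans facteur carré, et l'on identifie ensuite cette densité avec celle demandée pour $\widetilde\Phi_2(A)$. L'obstacle principal est précisément ce dernier passage: le Lemme~\ref{lem3} ne s'applique plus dans le cas $p^e = m = 2$, puisque $\Phi_2(A) = A+1$ peut être divisible par $2^k$ pour $k$ arbitrairement grand; la comparaison entre la densité de $A+1$ sans facteur carré et celle de $\widetilde\Phi_2(A) = (A+1)/{\rm p.g.c.d.\,}(A+1,2)$ sans facteur carré nécessite de traiter séparément la contribution locale du premier $2$, contrairement au cas générique $m \ne 2$ qui se déduit formellement des Propositions~\ref{gran} et~\ref{cp}.
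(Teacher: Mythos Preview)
Your approach is essentially the paper's own: the corollary is stated there as an immediate consequence of Proposition~\ref{cp} plugged into Proposition~\ref{gran}, and that is exactly what you do, with the extra care of justifying the passage from $\Phi_m$ to $\widetilde\Phi_m$ via Lemme~\ref{lem3}. For $m\ne 2$ your argument is complete and correct (the clause ``$p\notdiv m$'' is in fact redundant once $p\equiv 1\pmod m$, but harmless).

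Your reservations about the case $m=2$ are well placed and in fact go a bit further than the paper itself, which is content to record the density $6/\pi^2$ for $A+1$ squarefree without really analysing $\widetilde\Phi_2(A)$. You correctly isolate the issue: since Lemme~\ref{lem3} fails at $p^e=m=2$, dividing out the factor $2$ when $A$ is odd genuinely changes the local count at~$2$, and a clean statement for $\widetilde\Phi_2(A)$ would require handling that prime separately (one finds $7/\pi^2$ rather than $6/\pi^2$ if one insists on $\widetilde\Phi_2(A)$ squarefree). The paper's formulation should be read loosely here; your diagnosis of the obstacle is the right one.
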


\begin{remark}\label{rema4}
 Les valeurs de $P_m=\prd_{p \equiv 1\, ({\rm mod}\, m)}  \big( 1 - \Frac{\varphi(m)}{p^2} \big)$ 
se calculent tr\`es facile\-ment par le programme suivant~:

\medskip
\footnotesize 
$\{$$ for(m=  1000002,   1000003,  f=eulerphi(m); P=1.0; $\par$
for(n=1, 2*10^6, p=1+n*m; if(isprime(p)==1,  P=P*(1 - f/p^2) )  ); print(m,"   ",P) ) $$\}$

\normalsize
qui conduit au tableau~:
\footnotesize 
\begin{align*}
P_3& \approx  0.93484202308683713466409790668210927326\\
 P_4& \approx  0.89484123120292308233007546174564683811\\
 P_5 &\approx  0.95709281951397098677511212591026189432\\
P_{39}& \approx  0.99466134034387664509206853899643846793\\
P_{40}& \approx  0.98961654058761399079945594714123081337 \\
P_{10003} &\approx  0.99999392595496021757107201755865536021\\
P_{1000002}& \approx  0.99999964016779551958062234579864526853\\
\end{align*}
\end{remark}

\normalsize
\subsection{Densit\'es et probabilit\'es au niveau des $p$-quotients de Fermat}\label{probas}

Soit  $a \in \N \Sauf \{0, 1\}$ fix\'e. On \'ecrit que la probabilit\'e d'avoir $q_p(a) = 0$ est de la forme
${\rm Prob} \big (q_p(a) = 0 \big) = \Frac{1}{p^{1+ \epsilon(p, a)}}$, avec $\epsilon(p, a)$ voisin de 0. 
 
 \smallskip
Dans l'\'etude probabiliste de la condition $q_p(a)= 0$, $p$ est variable tendant vers l'infini de sorte que l'on a $a<p$
pour tout $p$ assez grand~; on va donc rechercher, comme expliqu\'e au \S\,\ref{subdp} (cf. Heuristique \ref{heur1}), 
la densit\'e locale associ\'ee qui consti\-tuera un majorant de la probabilit\'e correspondante.

\smallskip
Soit $u \in [0, p[$ donn\'e.
La densit\'e des $A$ \'etrangers \`a $p$ tels que $q_p(A)= u$ se lit aussi dans l'intervalle $[0, p^2[$ puisque $q_p(A+\Lambda\,p^2)\equiv q_p(A) \pmod p$ pour tout entier $\Lambda$.

\begin{lemma} \label{lambda}
Soit $z \in [1, p[$, $p$ premier~; alors il existe un unique $\lambda_u(z) \in [0, p[$ tel que $Z = z + \lambda_u(z)\,p \in [1, p^2[$
v\'erifie $q_p(Z)= u$. Le nombre $\lambda_u(z)$ est caract\'eris\'e par la congruence $\lambda_u(z) \equiv  z\,(q_p(z) -  u )\pmod p$
et on obtient $Z\equiv z^p -z u\,p \pmod{p^2}$.

Par cons\'equent, la densit\'e des $A\in \N \Sauf p\N$ tels que $q_p(A)= u$ est \'egale \`a $\Frac{1}{p}$.
\end{lemma}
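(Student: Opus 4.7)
Mon plan est d'exploiter directement le développement binomial de $Z^{p-1}$ pour relier $q_p(Z)$ à $q_p(z)$. Je poserais $Z = z + \lambda p$ avec $\lambda \in [0, p[$ arbitraire, puis je développerais $(z+\lambda p)^{p-1}$ modulo $p^2$ : tous les termes $\binom{p-1}{k}z^{p-1-k}(\lambda p)^k$ pour $k\geq 2$ s'annulent, d'où $Z^{p-1} \equiv z^{p-1} - z^{p-2}\lambda p \pmod{p^2}$ (en utilisant $(p-1)p \equiv -p \pmod{p^2}$). En divisant par $p$ et en invoquant $z^{p-2} \equiv z^{-1} \pmod p$ (petit théorème de Fermat), j'obtiendrais la relation clef $q_p(Z) \equiv q_p(z) - z^{-1}\lambda \pmod p$.

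La condition $q_p(Z) = u$ équivaudrait alors à $\lambda \equiv z(q_p(z) - u) \pmod p$, ce qui détermine de façon unique $\lambda_u(z) \in [0, p[$. Pour la seconde formule de l'énoncé, je substituerais dans $Z = z + \lambda_u(z) p$ : modulo $p^2$, $\lambda_u(z) p \equiv z p\,q_p(z) - zup \pmod{p^2}$, et l'identité entière $z^p = z(1 + p\,q_p(z)) = z \cdot z^{p-1}$ (conséquence immédiate de $z^{p-1} - 1 = p\,q_p(z)$ dans $\Z$) fournirait $Z \equiv z^p - zup \pmod{p^2}$.

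Pour la conséquence sur la densité, j'observerais d'abord que $q_p(A)$ ne dépend que de la classe de $A$ modulo $p^2$ pour $p \notdiv A$ (un calcul analogue à celui ci-dessus montre $q_p(A+p^2) \equiv q_p(A) \pmod p$), de sorte que la densité dans $\N \Sauf p\N$ se lit sur l'intervalle $[1, p^2[$. Cet intervalle contient exactement $p(p-1)$ entiers étrangers à $p$, chacun s'écrivant de façon unique $z + \lambda p$ avec $z \in [1, p[$ et $\lambda \in [0, p[$. La première partie du lemme associe à chaque $z$ un unique $\lambda$ tel que $q_p(Z) = u$, fournissant ainsi $p-1$ solutions dans $[1, p^2[$, d'où la densité $\frac{p-1}{p(p-1)} = \frac{1}{p}$.

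Le seul point délicat --- plus une subtilité de rédaction qu'un véritable obstacle --- sera de bien distinguer les deux statuts de $q_p(z)$ : d'une part un entier vérifiant l'identité exacte $z^{p-1} = 1 + p\,q_p(z)$, nécessaire pour obtenir la forme compacte $Z \equiv z^p - zup$ ; d'autre part sa classe modulo $p$, seule pertinente pour caractériser $\lambda_u(z)$. La cohérence est immédiate puisque $\lambda_u(z)$ ne dépend de $q_p(z)$ qu'à travers sa classe modulo $p$. Le reste étant un calcul élémentaire ne mobilisant que le développement binomial et le petit théorème de Fermat, je n'anticipe aucun obstacle sérieux.
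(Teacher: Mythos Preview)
Your proof is correct and follows essentially the same route as the paper: a binomial expansion modulo $p^2$ to relate $q_p(Z)$ to $q_p(z)$, followed by the same counting argument for the density. The only cosmetic difference is that the paper expands $(z+\lambda p)^p - (z+\lambda p)$ (where only the leading term of $(z+\lambda p)^p$ survives modulo $p^2$, giving $z^p - z - \lambda p$ directly), while you expand $(z+\lambda p)^{p-1}$ and keep two terms; both computations yield the same relation $\lambda \equiv z(q_p(z)-u) \pmod p$.
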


\begin{proof} Pour tout $\lambda \in\N$, $(z + \lambda \,p)^p - (z + \lambda \,p) \equiv z^p - z - \lambda \,p \!\pmod {p^2}$,
d'o\`u $\lambda \equiv z\,q_p(z) - Z\,q_p(Z)\equiv z\,q_p(z) - z\,q_p(Z) \pmod p$. Donc $q_p(Z) =u$ si et seulement si $\lambda = \lambda_u(z)\equiv z\,q_p(z)-z\,u \pmod p$. On a donc pour chaque $z \in [1, p[$ un unique 
$Z = z + \lambda_u(z)\,p \in [1, p^2[$ tel que $q_p(Z) =u$, d'o\`u la densit\'e ($Z$ est aussi le r\'esidu modulo $p^2$
de $z^p -z u\,p$). Pour $u=0$, $Z$ est le r\'esidu modulo $p^2$ de  $z^p$.
\end{proof}

Rappelons que $q_p(A) = 0$ est \'equivalent \`a $p^2 \div \widetilde \Phi_{o_p(A)}(A)$ (Th\'eor\`eme \ref{wtheorem}).
D'apr\`es les r\'esultats ``locaux'' (cf. \S\S\,\ref{subrag}, \ref{subcp}, Corollaire \ref{coro1}), 
la densit\'e des $A \in \N\Sauf p\N$ tels que $p^2 \div \widetilde \Phi_m(A)$ est
\'egale \`a $\Frac{\varphi(m)}{p\,(p-1)}$ (resp. $1$) si $m = o_p(A)$ (resp.  $m \ne o_p(A)$).  
En faisant la somme sur les ordres possibles, on retrouve bien la densit\'e $\sm_{d \div p-1}\Frac{\varphi(d)}{p\,(p-1)} = \Frac{1}{p}$. 
Revenons au cas d'un entier $a \geq 2$ fix\'e pour lequel la probabilit\'e d'avoir $q_p(a)=0$ est a priori major\'ee par $\Frac{1}{p}$.
On a facilement $o_p(a)> \Frac{{\rm log}(p)}{{\rm log}(a)}$
 puisque $a^{o_p(a)} = 1 + \lambda \, p$, $\lambda \geq 1$, et de fait ${\rm Prob}\big(o_p(a) = d \big)=0$ 
pour les $d<\Frac{{\rm log}(p)}{{\rm log}(a)}$.

\smallskip
Pour $a \in \N \Sauf p\N$ fix\'e, on a $o_p(a) \in \{d,\ d \div p-1\}$ et une heuristique raisonnable est que 
la probabilit\'e correspondante est major\'ee par la densit\'e relative \`a la propri\'et\'e locale $o_p(A) = d$, qui est
\'egale \`a $\Frac{\varphi(d)}{p-1}$, car $A$ 
n'est pas divisible par $p$ et seul le r\'esidu de $A$ dans $[1,p[$ intervient sachant qu'il y a exactement 
$\varphi(d)$ \'el\'ements d'ordre $d$ dans cet intervalle. 
Mais le ph\'enom\`ene pr\'ec\'edent sur les petites valeurs de $d$ rend les ``grands'' ordres plus probables pour $a$, 
ce qui semble pouvoir \^etre n\'eglig\'e dans la mesure o\`u, pour $h = \frac{{\rm log}(p)}{{\rm log}(a)}$, on a
$\sum_{d<h} \frac{\varphi(d)}{p-1} < O(1) \,\frac{{\rm log}^2(p)}{p}$.

\begin{remark}
Soient $a$ fix\'e et $p$ arbitrairement grand~; on a alors le ph\'eno\-m\`ene analogue suivant~: soit $g>a$
 et soit $G := \Big\{g^i, \  1 \leq  i< \Frac{{\rm log}(p)}{{\rm log}(g)} \Big\}  \subseteq [2, p[$.
Cet en\-semble est constitu\'e d'\'el\'ements plus grands que $a$, dont les ordres sont certains diviseurs 
$\delta$ de $p-1$, et ceci modifie le d\'ecompte pour $a$, ce qui fait que, a priori, ${\rm Prob} \big (o_p(a) = \delta \big)$
est inf\'erieur \`a $\Frac{\varphi(\delta)}{p-1}$.
\end{remark}

\begin{example} \label{ex1}
Prenons $p= 37813$, $a=2$~; alors pour  $g=3$, on a $G = \{ 3, 9, 27, 81$, $243, 729, 2187,$ $6561, 19683  \}$
dont les \'el\'ements sont d'ordres respectifs $18906$, $9453$, $6302$, $9453$, $18906$, $3151$, 
$18906$, $9453$, $6302$.
Pour $g=5$ on trouve les ordres $37812, 18906, 12604, 9453, 37812, 6302$.
On peut construire de tels ensembles jusqu'\`a $g= 193$ (donnant les ordres $37812, 18906$).

\smallskip
Donc pour $a=2$ (d'ordre $p-1=37812$), la probabilit\'e ne peut co\"incider avec la densit\'e $\frac{\varphi(p-1)}{p-1}
= 0.3165$. Le ph\'enom\`ene est difficile \`a quantifier, mais a une influence importante. 
\end{example}

La probabilit\'e correspondante de nullit\'e de $q_p(a)$, pour $a$ fix\'e et $p$ variable, est donc a priori fortement major\'ee par
$\sm_{d \div p-1} \Frac{\varphi(d)}{p-1} \times  \Frac{\varphi(d)}{p\,(p-1)}
 =  \Frac{1}{p\,(p-1)^2} \sm_{d \div p-1} \varphi(d)^2$.

\smallskip
En r\'esum\'e on a obtenu dans ce premier cadre le r\'esultat heuristique suivant~:

\begin{heuristic} \label{heur2}  {\it 
On a, pour $a\in \N\Sauf \{0, 1\}$ fix\'e et $p$ assez grand~:

\smallskip
\centerline{${\rm Prob} \big (q_p(a) = 0 \big) := \Frac{1}{p^{1+ \epsilon(p,a)}}  < \Frac{1}{p\,(p-1)^2} \sm_{d \div p-1}\varphi(d)^2$, }
 
ou de fa\c con \'equivalente}
$\ \epsilon(p, a)  >  \Frac{1} {{\rm log}(p)} \, \Big( 2\, {\rm log} (p-1) - {\rm log} \Big( \sm_{d \div p-1}\varphi(d)^2 \Big) \,\Big)$.
\end{heuristic}

\begin{remark} \label{remaeps}
Si l'heuristique pr\'ec\'edente est v\'erifi\'ee, alors on obtient~:

\smallskip
\centerline {$\epsilon(p, a)  > 0$ car $\Frac{1}{p^{1+\epsilon(p,a) }} < \Frac{\sum_{d \div p-1}\varphi(d)^2}{p\,(p-1)^2} < \Frac{\big (\sum_{d \div p-1}\varphi(d)\,\big)^2}{p\,(p-1)^2} = \Frac{1}{p}$}

\smallskip
 (o\`u $\Frac{1}{p}$ est la densit\'e des $A$ tels que $q_p(A)=0$). 
Autrement dit, si $\upsilon(p,a) = \upsilon(p)$ est la fonction
$\upsilon(p) = \Frac{1} {{\rm log}(p)} \, \Big( 2\, {\rm log} (p-1) - {\rm log} \big( \sm_{d \div p-1}\varphi(d)^2 \big) \,\Big)$,
on a $\epsilon(p,a) > \upsilon(p) >0$ pour tout $p$ assez grand.
Afin de proposer de telles fonctions $\epsilon(p,a)$, nous allons donner une condition suffisante de convergence des s\'eries du type
$\sm_p\Frac{1}{p^{1+ \epsilon(p,a)}}$, la s\'erie $\sm_p\Frac{1}{p^{1+ \upsilon(p)}}$ ne l'\'etant pas comme l'a montr\'e 
G. Tenenbaum (cf.~\S\,\ref{subdiv}).
\end{remark}

\subsection{Une s\'erie de r\'ef\'erence convergente sur les nombres premiers} \label{subconv}
Pour tout $n \geq 1$, d\'esignons par $p_n$ le $n$-i\`eme nombre premier. 

\begin{lemma} \label{infty}
Soit $C>1$ une constante et soit $\eta(p) := C\, .\, \Frac{ {\rm log}_3 (p)}{ {\rm log} (p)}$, o\`u 
${\rm log}_k$ d\'esigne le $k$-i\`eme it\'er\'e de la fonction ${\rm log}$. Alors on a
$S := \sm_{p \geq 2} \ \Frac{1}{p^{1 + \eta(p)}} < \infty$.
\end{lemma}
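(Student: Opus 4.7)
First I would simplify the summand. Since
$$p^{\eta(p)} = \exp\!\bigl( C\, \log_3(p) \bigr) = \bigl( \log_2(p) \bigr)^{C},$$
we have $\frac{1}{p^{1+\eta(p)}} = \frac{1}{p\,(\log\log p)^{C}}$, so that the claim reduces to showing that
$$S = \sum_{p \geq 3} \frac{1}{p\,(\log\log p)^{C}} < \infty \qquad \text{for } C>1.$$
Thus the whole question is a convergence statement involving an explicit slowly varying weight on the primes.

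\smallskip

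Next I would reduce the sum to an integral via partial summation against Mertens' theorem, which gives
$$A(x) := \sum_{p \leq x} \frac{1}{p} = \log\log x + M + O\!\Bigl(\tfrac{1}{\log x}\Bigr).$$
Setting $g(t) = 1/(\log\log t)^{C}$, Abel summation yields
$$\sum_{3 \leq p \leq x} \frac{1}{p\,(\log\log p)^{C}} = A(x)\,g(x) \;-\; \int_{3}^{x} A(t)\,g'(t)\,dt.$$
Since $g'(t) = -\dfrac{C}{t\,\log t\,(\log\log t)^{C+1}}$, this becomes
$$\frac{A(x)}{(\log\log x)^{C}} \;+\; C\int_{3}^{x} \frac{A(t)\,dt}{t\,\log t\,(\log\log t)^{C+1}}.$$
The boundary term tends to $0$ because $A(x)\sim \log\log x$ and $C>1$, so $(\log\log x)^{\,1-C}\to 0$.

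\smallskip

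For the main integral, I would use $A(t) = O(\log\log t)$ to bound
$$\int_{3}^{\infty} \frac{A(t)\,dt}{t\,\log t\,(\log\log t)^{C+1}} \;\ll\; \int_{3}^{\infty} \frac{dt}{t\,\log t\,(\log\log t)^{C}},$$
and then perform the substitution $u = \log\log t$, $du = \dfrac{dt}{t\,\log t}$, which turns the right-hand side into $\int_{\log\log 3}^{\infty} u^{-C}\,du$. This integral converges exactly when $C>1$, which is our hypothesis, so the partial sums are bounded and $S<\infty$.

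\smallskip

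The argument is essentially a routine application of Mertens plus one change of variable; the only delicate point is ensuring that the error term in Mertens' formula is harmless and that the hypothesis $C>1$ is used precisely where it is needed (the integrability of $u^{-C}$ at infinity). This also matches the remark that the borderline case (here, $C=1$, corresponding to the function $\upsilon(p)$ of Remark \ref{remaeps}) leads to a divergent series, as noted by Tenenbaum.
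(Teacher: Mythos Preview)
Your proof is correct and reaches the same conclusion as the paper, but by a genuinely different route. After the common first step of rewriting the summand as $\frac{1}{p\,(\log\log p)^{C}}$, the paper indexes the primes as $p_n$, invokes Rosser's inequality $p_n > n\log n$ to dominate the sum by $\sum_{n\ge n_0} \frac{1}{n\log n\,(\log_2 n)^{C}}$, and then compares with the integral $\int \frac{dx}{x\log x\,(\log_2 x)^{C}}$. You instead apply Abel summation against Mertens' estimate $\sum_{p\le x}\frac{1}{p}\sim\log\log x$. Both approaches are standard and both land, after one substitution, on the same Bertrand-type integral $\int u^{-C}\,du$; yours is perhaps the more natural in that it stays with the prime-counting side and never needs a bound on $p_n$, while the paper's argument is marginally more elementary (a pointwise inequality and the integral test, with no partial summation).

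One small inaccuracy in your closing remark: the function $\upsilon(p)$ of Remarque~\ref{remaeps} is not $\eta(p)$ with $C=1$; it is the distinct expression $\frac{1}{\log p}\bigl(2\log(p-1)-\log\sum_{d\mid p-1}\varphi(d)^{2}\bigr)$, and Tenenbaum's divergence result for $\sum_p p^{-1-\upsilon(p)}$ (recalled in \S\ref{subdiv}) is a separate computation rather than the $C=1$ case of your integral. Your structural observation --- that $C>1$ is exactly the threshold for convergence of $\int^{\infty} u^{-C}\,du$ --- is of course correct and is the heart of both arguments.
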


\begin{proof} 
On a $\sm_{p \geq 2}\ \Frac{1}{p^{1 + C . {\rm log}_3 (p) / {\rm log} (p) }} =  \sm_{p \geq 2} \ \Frac{1}{p\,.\, {\rm log}^C_2 (p) }
= \sm_{n \geq 1} \  \Frac{1}{p_n\,.\, {\rm log}^C_2 (p_n) }$.

\smallskip
On sait que $p_n > n\, {\rm log}(n)$ (th\'eor\`eme de Rosser)~; donc on peut \`a une constante additive pr\`es
majorer $S$ par
$\sm_{n \geq n_0}\  \Frac{1}{n\, {\rm log}(n)\,.\, {\rm log}^C_2 (n\, {\rm log}(n)) }  <
\sm_{n \geq n_0} \ \Frac{1}{n\, {\rm log}(n)\,.\, {\rm log}^C_2 (n) }$
qui a m\^eme comportement que $\displaystyle\int_{x_0}^\infty  \Frac{dx}{x\, {\rm log}(x)\,.\, {\rm log}^C_2 (x) } =
\displaystyle \int_{y_0}^\infty  \Frac{dy}{y\,.\, {\rm log}^C(y) } < \infty$.
\end{proof}

 Cependant il ne faut pas oublier que $\epsilon(p,a) > \upsilon(p)$ 
et que par cons\'equent $\epsilon(p,a) > \eta(p)$ reste largement possible.
La diff\'erence entre $\upsilon(p)$ (situation divergente) et $\eta(p)$ (situation convergente) est tr\`es faible comme le montrent
les r\'esultats num\'eriques suivants pour $p$ tr\`es grand (avec $C=1.1$)~:

\smallskip
\footnotesize 
$\{$$for(n=10^{40}, 10^{40}+400, p=1+2*n; if(isprime(p) ==1, S=0.0; D=divisors(p-1);  $\par$
ND=numdiv(p-1); for(k=1, ND, d=component(D,k); f=eulerphi(d); S=S+f^2); $\par$
E= 1.1*log(log(log(p))) / log(p); U= (2*log(p-1) - log(S))/log(p); print(E-U,"    ",p ) ))$$\}$
\begin{align*}
eta-upsilon =  0.009409   \ \ \  p=20000000000000000000000000000000000000219\\
eta-upsilon =  0.004175   \ \ \   p=20000000000000000000000000000000000000231\\
eta-upsilon =  0.011358   \ \ \  p= 20000000000000000000000000000000000000243 \\
eta-upsilon =  0.008018   \ \ \  p= 20000000000000000000000000000000000000477\\
eta-upsilon =  0.005724   \ \ \  p= 20000000000000000000000000000000000000513\\
eta-upsilon =  -0.00386   \ \      p= 20000000000000000000000000000000000000593\\
eta-upsilon =  0.009301    \ \ \ p= 20000000000000000000000000000000000000723
\end{align*}
\normalsize
Le cas de ``croisement des courbes'' correspond par exemple au cas o\`u $p-1$ est divisible par un tr\`es grand nombre premier
donnant un grand $\varphi(d)$. Ci-dessus, on a le cas de
$p-1 = 2 ^4  \times  3^2 \times 11 \times 13   \times 971250971250971250971250971250971251$.

\subsection{Premi\`ere estimation majorante du nombre de solutions $p$ \`a $q_{p}(a)= 0$}\label{subdiv} 
Une estimation majorante du nombre de $p \leq x$
tels que $q_{p}(a)= 0$ est $\sm_{p \leq x} \Frac{1}{p^{1+\upsilon(p)}}$. 
Or la s\'erie $S := \sm_p \Frac{1}{p^{1+\upsilon(p)}}= 
\sm_p \Frac{1}{p\,(p-1)^2}\sm_{d \div p-1}\varphi(d)^2$, comme on pouvait s'y at\-tendre, est divergente, 
et G. Tenenbaum a d\'emontr\'e que 
$$S(x) := \sm_{p \leq x} \Frac{1}{p\,(p-1)^2}\sm_{d \div p-1}\varphi(d)^2 = O({\rm log}_2 (x))$$ 
lorsque $x \to \infty$ (cf. \cite{T2}). Sa d\'emonstration repose, entre autres, sur le th\'eor\`eme 
de Bombieri--Vinogradov rappel\'e dans \cite{T1} (Th\'eor\`eme II.8.34).
On en d\'eduit que pour $a$ arbitraire fix\'e le nombre moyen de solutions $p \notdiv a$ \`a  $q_p(a)=0$ v\'erifie~:
$$\Big\vert \Big\{ p \leq x,\  q_p(a)=0  \Big\vert \Big\} < O( {\rm log}_2 (x)) < \Frac{1}{2}\, {\rm log}_2 (x) $$ 

pour $x \to \infty$, apr\`es une estimation de la constante, ce qui reste une croissance tr\`es faible mais 
ne permet pas de conclure dans le cas de $a$ fix\'e une fois pour toutes (pour $x=10^8$, $S(x) \approx 1.3380$
et $\frac{1}{2}\, {\rm log}_2 (x) \approx 1.4567$).

\smallskip
La divergence de $\sm_p \Frac{1}{p^{1+\upsilon(p)}}$ n'est pas contradictoire avec une convergence \'eventuelle 
de $\sm_p \Frac{1}{p^{1+\epsilon(p,a)}}$ puisque chaque terme de $S$ est un majorant strict de 
${\rm Prob} \big (q_p(a) = 0 \big)$ (i.e., $\epsilon(p,a) > \upsilon(p)$ pour tout $p$ assez grand), voire un majorant d'un ordre 
de grandeur important, et il conviendra de revenir sur ce point, ce qui sera fait Section~\ref{section4} en partant du point de vue
heuristique de l'existence d'une loi de probabilit\'e binomiale sur le nombre de solutions \`a $q_p(z)=0$ pour $z\in [2, p[$.

\begin{remark}\label{rema11}
Comme expliqu\'e au \S\,\ref{subdp}, le fait que $A \in \N$ ne soit pas born\'e dans les calculs de densit\'es est fondamental 
puisque d\'ej\`a les $A$ qui sont de la forme $A= 1 +k \, (p_1 p_2 \cdots p_n)^2$ (o\`u les $p_i$ sont 
des nombres premiers distincts) conduisent \`a $q_{p_i}(A)= 0$ pour tout~$i$, et il y a bien d'autres fa\c cons de 
cr\'eer des $A$ avec beaucoup de $q_p(A)=0$, tout ceci ``comptant'' dans une estimation du nombre de solutions $p$. 

\smallskip
En effet, pour chaque $p \in \{p_1, \ldots, p_n\}$
soit $(B_p^j)_{j=1, \ldots, p-1}$ la famille des $p-1$ solutions canoniques $B_p^j\in [1, p^2[$ \`a $q_p(B_p^j) =0$
(cf. Lemme \ref{lambda})~;
alors tout $A$ satisfaisant \`a l'un des syst\`emes de congruences~:
\begin{align*}
A &\equiv B_{p_1}^{j_1} \!\!\pmod {p_1^2} ,\ \ j_1 \in \{1, \ldots,p_1-1\}     \\
&  \ldots  \\
A &\equiv B_{p_n}^{j_n} \!\! \pmod {p_n^2}  ,\ \ j_n \in \{1, \ldots,p_n-1\}  
\end{align*}
conduit \`a $q_{p_1}(A)= \cdots= q_{p_n}(A)= 0$, et c'est en outre une \'equivalence. 
Naturel\-lement $A$ devient en g\'en\'eral tr\`es grand.
\end{remark}

\begin{example}\label{ex2}
Pour $p_1=5$, $p_2=7$, on obtient les 24 solutions fondamentales modulo $35^2$~:

\medskip
$\{1, 18, 68, 99, 226, 276, 293, 324, 374, 393, 557, 607, 618, 668, 832, 851$, 

\hfill $901, 932, 949,  999, 1126, 1157, 1207, 1224\}$,

\smallskip
la plus petite solution $a >1$ de ce type \'etant $18$. 
\end{example}

\subsection{Quotients de Fermat non nuls sur un intervalle -- Exemples}\label{ex3}
 Un des aspects du probl\`eme de la finitude ou non des quotients de Fermat nuls est qu'il n'est pas rare de trouver  des valeurs de $a$ pour lesquelles $q_p(a) \ne 0$ sur un intervalle $p \in [2, B[$ o\`u $B$ est de l'ordre de $10^{10}$, ce qui accr\'edite la finitude. 

\smallskip
Or s'il existe effectivement des $a$ tels que $q_p(a) \ne 0$ pour tout $p$, un tel cas de finitude (triviale)
pour $q_p(a) = 0$ pourrait vouloir dire que tous les entiers $a \in \N\Sauf \{0, 1\}$ ont un nombre fini de quotients de Fermat nuls, une heuristique naturelle \'etant que l'on ne peut avoir deux cat\'egories de nombres fondamentalement diff\'erentes. 

\smallskip
On abordera cette existence  (sous les heuristique pr\'ec\'edentes et les r\'esultats de densit\'e) au Th\'eor\`eme \ref{theo2}
par un calcul effectif de densit\'e.

\medskip
Pour $2 \leq a \leq 100$ on trouve les exemples suivants (le cas $p=2$ \'eliminant tous les $a\equiv 1 \pmod 4$, $p=3$ \'eliminant tous les $a\equiv 1,  8 \pmod 9$, etc.)~:

\smallskip
Pour $a=34$ la premi\`ere solution est  $p=46145917691$.

\smallskip
Pour $a=66$, on trouve la premi\`ere solution $p = 89351671$.

\smallskip
Pour $a=88$, on trouve la premi\`ere solution $p = 2535619637$.

\smallskip
Pour $a=90$, on trouve la premi\`ere solution $p =  6590291053$.

\smallskip
Pour $a=47$ et $a=72$ on ne trouve aucune solution pour $p \leq 10^{11}$.

\smallskip
Dans \cite{KJ} on trouve les exemples suivants pour $a \in [2, 101]$ et $p \leq 10^{11}$~:

\smallskip
$(a,p) = (5, 6692367337), (23, 15546404183), (37, 76407520781), (97, 76704103313)$ et la solution remarquable
$(5, 188748146801)$, ce qui semble indiquer que la finitude \'eventuelle des $q_p(a)=0$ n'implique pas n\'ecessairement l'existence d'une borne, pour $p$, fonction de $a$.  

\smallskip
\smallskip
On peut poursuivre cette \'etude au moyen du programme suivant (par tranches)~:

\medskip
\footnotesize 
$\{$$A=47; p=10^{11} + 1; while(p < 2*10^{11}, p=nextprime(p+2); $\par$
 Q=Mod(A,p^2)^{(p-1)};  if(Q==1, print(p) ))$$\}$

\medskip
 \normalsize
De $a = 100000$ \`a $100099$, les r\'esultats sont similaires mais avec une rar\'efaction certaine,
car $a$ est fix\'e mais plus grand que dans le cadre classique ($a= 2, 3, \ldots$).

\smallskip
Jusqu'\`a $p< 10^8$, aucune solutions pour $a =100014, 100015,100022,
100030, 100055$, $100062, 100075, 100083$.

\smallskip
Pour d'autres exemples num\'eriques voir \cite{KJ}.

\section{Seconde analyse probabiliste pour $q_p(a) =0$} \label{section4}

L'approche pr\'ec\'edente (Section \ref{section3}), de type ``estimations de densit\'es'' relativement \`a la variable enti\`ere $A$,
ne tient pas assez compte du fait que l'on \'etudie $q_p(a)$ pour $a$ fix\'e ``petit'' et $p$ variable arbitrairement grand. 
Or, comme on l'a vu, le simple fait que $q_p(a)=0$
pour $p\gg a$ entra\^ine de nombreuses solutions dans $[2, p[$, puisque $q_p(a^j)=0$ pour $1 \leq j < 
\frac{{\rm log}(p)}{{\rm log}(a)}$ (avec $a^j \in [2, p[$).
D'o\`u la n\'ecessit\'e d'une premi\`ere \'etude sur l'intervalle $[2, p[$, \'etude qui ne d\'epend alors que de $p$.

\subsection{Etude des solutions \`a $q_p(z)=0$, $z \in [2,  p [$} \label{subdens} Dans cette partie nous allons essayer de justifier
l'existence d'une loi de probabilit\'e classique en utilisant un certain nombre d'arguments th\'eoriques et des
calculs num\'eriques.

\subsubsection{Retour sur l'aspect densit\'es vs probabilit\'es}\label{dens}
Soit $p$ un nombre premier fix\'e. Pour chaque $z \in [1, p[$ il existe un unique $\lambda(z) \in [0, p[$ tel que
$Z := z + \lambda(z) \, p \equiv z^p \pmod{p^2}$ v\'erifie $q_p(Z)=0$, d'o\`u la densit\'e des $A \in\N \Sauf p\N$
tels que $q_p(A)=0$  (pour $p$ fix\'e), \'egale \`a $\frac{1}{p}$. Ceci a \'et\'e vu \S\,\ref{probas} o\`u 
le Lemme \ref{lambda} d\'emontre une certaine \'equir\'epartition puisque la densit\'e des $A \in \N \Sauf p\N$ tels que $q_p(A)= u$ 
est aussi \'egale \`a $\frac{1}{p}$ quel que soit $u \in [0, p[$.
Autrement dit, si l'on fixe provisoirement $p$, pour $A \in [1, p^2[$ la probabilit\'e d'avoir $q_p(A)= u$ devient exactement \'egale \`a
la densit\'e $\frac{1}{p}$.

\begin{remark} Si $a$ est fix\'e et si $h$ est la partie enti\`ere de $\frac{{\rm log}(p)}{{\rm log}(a)}$, 
on a pour $j=1,\ldots,h$, $a^j \in [2, p[$ et $q_p(a^j) \equiv j\,q_p(a) \pmod p$.
Si $q_p(a)=0$, tous les $q_p(a^j)$ sont nuls, mais si $q_p(a)=u \ne 0$, on a $q_p(a^j) \equiv j\,u \pmod p$~; 
ces quotients de Fermat sont alors tous distincts et non nuls modulo $p$.
\end{remark}

On verra au moyen des exemples num\'eriques ci-apr\`es (cf. \S\,\ref{num1}) que le nombre de cas 
o\`u $q_p(z)=0$ pour $z\in [2, p[$
est statistiquement tr\`es faible (quelques unit\'es quelle que soit la taille de $p$)~; naturellement
il existe des cas exceptionnels~: lorsqu'une solution $z$ v\'erifie $z \ll p$, on a un certain nombre
de puissances de $z$, solutions dans $[2, p[$, mais on peut supposer que ceci est compens\'e par le fait que 
$Z \ll  p$, pour l'\'el\'ement correspondant $Z=z+\lambda(z)\,p \in [2, p^2[$, est d'autant moins probable. 
Si l'on se base sur l'existence d'une loi de probabilit\'e telle que ${\rm Prob}\big( \lambda(z) = 0 \big) <\frac{1}{p}$ (\`a comparer \`a 
${\rm Prob}\big(q_p(A) = 0 \big) = \frac{1}{p}$ pour $A \in [2, p^2[$),
on est fond\'e \`a \'ennoncer l'heuristique suivante qui semble l\'egitime au vu du faible nombre moyen de solutions pour chaque~$p$~:

\begin{heuristic} \label{heur33} {\it 
Les $p-2$ valeurs $Z = z + \lambda(z)\,p \equiv z^p \pmod{p^2}$, $z \in [2, p[$, $\lambda(z) \in [0, p[$, 
telles que $q_p(Z)=0$, sont al\'eatoires et ind\'ependantes dans $[2, p^2[$. 
Ceci est \'equivalent \`a la propri\'et\'e analogue pour les $p-2$ valeurs $\lambda(z) \in[0, p[$.}
\end{heuristic}

Une \'etude num\'erique montre clairement que le nombre de cas o\`u $\lambda (z)=0$ (i.e., $q_p(z)=0$)
est tr\`es faible car il correspond \`a une probabilit\'e voisine de $\frac{1}{p}$ au plus pour chaque $z$ (loi binomiale
de param\`etres $(p-2, 1/p)$,
cf. Heuristique \ref{heur4} et Remarque \ref{rema=1}).
Comme il y a $p-2$ solutions $Z \in [2, p^2[$, on peut s'attendre en moyenne \`a une solution $z\in [2, p[$
et \`a $p-3$ solutions $Z \in [p+1, p^2[$.

\smallskip
De m\^eme que pour les valeurs de $q_p(z)$, non toutes r\'ealis\'ees dans $[0, p[$ (cf. \S\,\ref{manques}), 
les nombres $\lambda (z) \in [0, p[$ tels que $q_p(z+\lambda (z)\,p) = 0$ ne sont pas tous atteints 
(il y a aussi environ $\frac{1}{3}$ des valeurs dans ce cas), 
ce qui est compatible avec le fait que en moyenne ${\rm Prob}\big(\lambda (z) = v \big) < \frac{1}{p}$ pour $v\in [0, p[$ 
(pour $p=11$, les $v =1,4,5,6,9$ ne sont pas atteints).

\subsubsection{Recherche num\'erique des solutions $z \in[2, p[$}\label{num1}
 Consid\'erons le programme suivant pour une tranche $B< p < B+200$~; pour chaque solution $z \in[2, p[$, 
 on indique l'ordre $d$ de $z$~:

\smallskip
\footnotesize 
$\{$$B=10^7; p=B;  while(p<B+200, p=nextprime(p+2); print(p); p2=p^2;  for(z=2, p-1, $\par$
Q=Mod(z,p2)^{(p-1)}-1;  if(Q==0, d=znorder(Mod(z,p)); print("      ",z,"   ",d) )) )$$\}$

\medskip
 \normalsize
Pour de grandes valeurs de $p$, on obtient peu de solutions comme attendu~:
{\footnotesize 
\begin{align*}
p= 10000019& \\
p= 10000079& \\
    &  z_1=6828481, \ \ \  d=  909098,     \\
     & z_1=9659873,  \ \ \  d=   5000039 ,    \\
p= 10000103&  \\
        &z_1= 4578211,   \ \ \  d=  386     ,     \\
     & z_1=4215058,   \ \ \  d=  10000102 ,    \\   
      &z_2=4732368,  \ \ \  d =   10000102 ,    \\
     &z_3= 8804922,   \ \ \  d=  10000102  ,  \\
p= 10000121&  \\
    & z_1= 1778643,   \ \ \  d=  10000120 ,     \\
    & z_1= 3601025 , \ \ \  d=   5000060 ,   \\
p= 10000139&  
\end{align*} }
\normalsize
Pour $p =1110000127$ (pris au hasard), il y a l'unique solution $z =723668846$~; le nombre premier suivant, $p=1110000149$,
donne 0 solutions dans $[2, p[$.

\smallskip
Ceci est assez analogue au cas des petits nombres premiers (nous omettons les $p = 2,3,5,7$,
$13, 17,19,23,31,41$ ne conduisant \`a aucune solution dans $[2, p[$)~:

{\footnotesize
\medskip
$p= 11$ ($z_1=3, \ d= 5,\  z_2 =9 , \ d=5$)~;  
$p= 29$ ($z_1 = 14 , \ d= 28$)~; 
$p= 37$ ($z_1 = 18 ,\  d=36$)~; 
$p= 43$ ($z_1 =  19 , \ d= 42$). }

\normalsize
\medskip
En outre les solutions $z \in [2, p[$ telles que $q_p(z)=0$ sont assez bien r\'eparties comme le v\'erifie le programme suivant qui 
compte (sur l'ensemble des $p<B$) le nombre $N_t$ de solutions sur un intervalle de longueur $(p-1)/t$, o\`u $t$ est une constante 
ajustable (ind\'ependante de $p$)~; on compare $N_t$ \`a $\frac{N}{t}$, o\`u $N$ est le nombre de solutions sur $[2, p[$. 
Les nombres $N_t$ et $N$ sont cumul\'es sur l'ensemble des $p$ car comme on vient de le voir, le nombre de solutions 
pour chaque $p$ est trop faible~:

\medskip
\footnotesize 
$\{$$B=10^6; N=0; t=25.0; Nt=0; p=1; while(p<B, p=nextprime(p+2); $\par$
p2=p^2; for(z=2, p-1, Q= Mod(z,p2)^{(p-1)}-1; if(Q==0, N=N+1; $\par$
 if(z<(p-1)/t, Nt=Nt+1)))); print(Nt,"   ",floor(N/t))$$\}$

\medskip
 \normalsize
On constate une bonne \'equir\'epartition en d\'epit de la m\'ethode utilis\'ee~; par exemple, pour $B=2\,.\, 10^5$, 
on trouve $N_t = 730$ pour une moyenne $\frac{N}{t}$ \'egale \`a $718$. 

\smallskip
D'autres exp\'erimentations num\'eriques 
montrent le ph\'enom\`ene suivant. On calcule (sachant que $\lambda(z) +\lambda(p-z) = p-1$) 
les quantit\'es $\sigma_n(p) := \Frac{2\,(n+1)}{(p-1)^{n+1}}  \sum_{z=1}^{(p-1)/2} \lambda(z)^n$
pour tout $n \geq 1$, o\`u l'on rappelle que $q_p(z+\lambda(z)\,p)=0$. 
On obtient alors une remarquable convergence altern\'ee vers 1~:

\medskip
\footnotesize 
 $\{$$n=11; for(h=1, 5, p=nextprime(10^7+1000*h); p2=p^2;  lambda = 0.0; $\par$
for(z=1, (p-1)/2, Z=Mod(z,p2); B=Z^p-Z; C=component(B,2)/p; $\par$
lambda=lambda+C^n);  print(p,"   ",2*(n+1)*lambda/(p-1)^{(n+1)}) )$$\}$

\medskip
$p=10001009 \ \ \ \sigma_{11}(p) = 1.0000467276683123307757138472299832521 $\par$
p=10002007  \ \ \  \sigma_{11}(p) = 1.0013551929880908863082167239611802354 $\par$
p=10003001  \ \ \  \sigma_{11}(p) = 1.0003688721711444598035617327427726537 $\par$
p=10004017  \ \ \  \sigma_{11}(p) = 0.9996190495531549422360323290673549366 $\par$
p=10005007  \ \ \  \sigma_{11}(p) = 0.9987657593324465195103425458241420008 $
\normalsize

\subsubsection{Classement des nombres premiers $p$ par nombre de solutions $z \in [2, p[$}\label{num2}
Le programme suivant (d'ex\'ecution assez longue) calcule les proportions de nombres premiers $p$ pour lesquels 
on a exactement 0, 1, ou 2 solutions, puis lorsque l'on a au moins 3 solutions $z\in [2, p[$ telles que $q_p(z)=0$~:

\smallskip
\footnotesize 
$\{$$N0=0; N1=0; N2=0; N3=0; H=2*10^5; B= 2*10^3; p=B; N=0.0; $\par$
while(p<B+H, p=nextprime(p+2); N=N+1; p2=p^2; Np=0; $\par$
for(z=2, p-1, Q=Mod(z,p2)^{(p-1)}-1; if(Q==0, Np=Np+1) ); $\par$
if(Np==0, N0=N0+1); if(Np==1, N1=N1+1); if(Np==2, N2=N2+1);$\par$
 if(Np>=3, N3=N3+1)); print(N0/N,"   ",exp(-1));print(N1/N,"   ",1-exp(-1) ); $\par$
print(N2/N,"   ",1-2*exp(-1));print(N3/N,"   ",1-5/2*exp(-1)) $$\}$

\smallskip
 \normalsize
Comme les probabilit\'es indiqu\'ees sont d'abord pour 0 solutions,
puis pour au moins 1 solution, 2 solutions, 3 solutions, on doit cumuler les nombres de solutions $N_1, N_2, N_3$ donn\'es par le programme
(naturellement, $N_0+N_1+N_2+N_3=N$)~:
\begin{align*}
&\hbox{cas de 0 solutions :}   &\Frac{N_0}{N} = 0.3694945; &  &\hbox{probabilit\'e} &\approx 0.3678794 \\
&\hbox{au moins 1 solution :}   &\Frac{N_1+N_2+N_3}{N} =0.6305054; &  &\hbox{probabilit\'e} &\approx  0.6321205 \\
&\hbox{au moins 2 solutions :}    &\Frac{N_2+N_3}{N} =0.2646531; &   &\hbox{probabilit\'e} &\approx 0.2642411 \\
&\hbox{au moins 3 solutions :}   &\Frac{N_3}{N} =0.0805782 ;&  &\hbox{probabilit\'e} &\approx  0.0803014 
\end{align*}
Dans ce cas, les r\'esultats num\'eriques sont remarquablement coh\'erents avec la r\'epartition probabiliste que nous allons
pr\'eciser au \S\,\ref{loi}. 

\smallskip
Noter que dans le m\^eme intervalle pour $p$, il y a 87 solutions cumul\'ees $z<\sqrt p$ pour $17866$ solutions cumul\'ees
(proportion $0.00487$). La tranche $]2.10^3, 2\,(10^3\!+\!10^5)[$ comporte 17845 nombres premiers 
(une solution en moyenne comme pr\'evu). 

\subsubsection{Commentaires au sujet des solutions ``exceptionnelles''}\label{remaconcl}

D\`es que $q_p(a) = 0$ pour $a \ll p$, plusieurs puissance de $a$ fournissent des solutions dans $[2, p[$~;
pour $p=3511$, on a les solutions $2, 4, 8, 16, 32, 64, 128, 256, 512, 1024$, $2048 < p$.
Pour $p=40487$, on a les solutions $5, 25, 125, 625, 3125, 15625 < p$; comme $4492$ est aussi une ``petite'' solution, on obtient
la solution $5\,.\,4492 = 22460 <p$, etc.

\smallskip
 La situation pr\'ec\'e\-dente
pourrait \^etre interpr\^et\'ee comme une d\'ependance de variables al\'eatoires~; cependant, en termes de solutions dans $[2, p^2[$, on trouvera toujours $p-2$ solutions $Z=z+\lambda(z)\,p$ \`a $q_p(Z)=0$, dont les pr\'ec\'edentes 
(exceptionnelles mais non suppl\'ementaires), et en un sens on peut consid\'erer qu'il ne s'agit que d'une question de r\'epartition et non d'une d\'ependance probabiliste, car alors on a ``moins de grandes solutions'' dans $[p+1, p^2[$ (par exemple, pour $p=11$ on a
$q_p(a)=0$ pour $a= 3, 9 \in [2, p[$ et $a=27, 40, 81, 94, 112, 118, 120 \,\in \  ]p, p^2[$). 

\smallskip
De fait le c\^ot\'e automatique conduisant \`a $\lambda(a^j)=0$ pour tout $a^j<p$ se rencontre pour d'autres valeurs de $\lambda(z)$~;
par exemple, pour $p=97$, on a $\lambda(z) = 41$ pour $z_1=54$, $z_2=68$, $z_3=75$, $z_4=92$~; ce ph\'enom\`ene
est d'ailleurs n\'ecessaire puisqu'on sait que beaucoup de valeurs de $q_p(z)$ ne sont pas atteintes (cf. \S\,\ref{manques}).

\smallskip
Ce type d'\'ev\'enement se produit a priori avec la m\^eme (faible) probabilit\'e, et on peut analyser ce qui pr\'ec\`ede de la fa\c con suivante~: soit $a \geq 2$ fix\'e \'etranger \`a $p$, d'ordre $d$, et soit $a_j \in [2, p[$ le r\'esidu modulo $p$ de $a^j$, $j=1,\ldots, d-1$~; posons $a^j \,a_j^{-1} \equiv 1 + \theta_j\,p \pmod{p^2}$, $\theta_j \in [0, p[$, alors on obtient 
$q_p(a_j) \equiv j\,q_p(a) + \theta_j \pmod p$.

\smallskip
Autrement dit, $j$ \'etant donn\'e, le quotient de Fermat de $a_j$ d\'epend
de celui de $a$ au moyen d'une formule canonique, le cas $q_p(a)=0$, $\theta_j=0$ pour tout $a^j < p$, n'\'etant qu'un cas 
particulier de cette formule. 

\smallskip
Le programme suivant donne la r\'epartition
des valeurs de $\lambda(z)$, $z \in [2, p[$, et celle du nombre de solutions \`a $\lambda(z)=v$, $v$ donn\'e ou pris au hasard~;
pour $10^3 \leq p \leq 10^3+10^4$ il y a $1168$ nombres premiers, et on a retenu le nombre $K$ de cas pour lesquels il y a au moins 4 solutions~:

\medskip
\footnotesize
$\{$$X=50; S=0; for(j=1,X, v=random(10^4); K=0; B=2*10^4; H=10^4;$\par$
 p=B; while(p<B+H, p=nextprime(p+2); p2=p^2; N=0;  $\par$
for(z=2, p, Q=Mod(z,p2)^p-Mod(z,p2); lambda=component(Q,2)/p;  $\par$
if(lambda==v, N=N+1) ); if(N>=4, K=K+1) );  print(v," ",K); S=S+K ) ;  $\par$
NP=0; p=B; while(p<B+H, p=nextprime(p+2); NP=NP+1);  $\par$
print(NP,"   ",S); print((S+0.0)/(X*NP))$$\}$

\smallskip
\normalsize
En prenant d'abord $v=0, \ldots, 9$, on obtient $(v, K) =$
$(0, 24)$, $(1, 21)$, $(2 , 26)$, $(3 ,17)$, $(4 , 20)$, $(5 , 33)$, $(6 , 25)$, $(7 , 21)$, $(8 , 22)$, $(9 , 21)$.

\smallskip
Pour une autre tranche de valeurs de $v$, on obtient $(v, K) =$
$(123, 21)$, $(124, 11)$, $(125 , 27)$, $(126 ,23)$, $(127  , 32)$, $(128 , 19)$, $(129  , 17)$, 
$(130 , 21)$, $(131 , 18)$, $(132 , 21)$.

\smallskip
 Dans tous les essais effectu\'es, $v=0$ ne semble pas jouer un r\^ole particulier.

\smallskip
La moyenne cumul\'ee observ\'ee pour le nombre $K$ est de $22$~; or $\frac{22}{1168} \approx 0.0188356$, et la proba\-bilit\'e 
que nous d\'efinirons pour ``au moins 4 solutions \`a $\lambda(z) =v$'' est \'egale \`a $0.0189$ (cf. Remarque \ref{rema=1}), ce qui constitue une v\'erification remarquable des arguments pr\'ec\'edents. 
Une exp\'erimentation utilisant la fonction {\it random} pour $v \in [0, 10^4[$, pour une tranche de 
$984$ nombres premiers $p>2\,.\,10^4$, conduit \`a la valeur $0.019268$.

\smallskip
Remarquons aussi que si par exemple
$q_p(2)$ \'etait nul pour une infinit\'e de $p$, alors le nombre $h$ de solutions dans $[2, p[$, d\^ues aux $a_j = 2^j$, 
tendrait vers l'infini pour une sous-suite de $p$, ce qui peut para\^itre excessif au regard de la 
r\'epartition (i.e., de la densit\'e) sur $[2, p^2[$ (cf. r\'esultats num\'eriques du \S\,\ref{num1}).

\subsubsection{Existence d'une loi de probabilit\'es}\label{loi} On suppose $z\in [2, p[$ car 1 est toujours solution.
Ce qui pr\'ec\`ede conduit \`a une heuristique 
utilisant une loi binomiale de param\`etres $\big(p-2, \Frac{1}{p} \big)$, car on peut consid\'erer que l'on r\'ealise $p-2$ tirages 
 pour lesquels on regarde combien de fois on obtient l'\'ev\'enement $\lambda(z)=0$.
 Le param\`etre $\frac{1}{p}$ est une approximation de ${\rm Prob}\big(\lambda(z)=0\big)$~; la probabilit\'e d'avoir $n$
 cas favorables exactement est $\hbox{$\binom{p-2} {n}$} \frac{1}{p^n} \big (1-\frac{1}{p} \big)^{p-2-n} = 
 \hbox{$\binom{p-2} {n}$} \frac{1}{p^{p-2}}  (p-1)^{p-2-n}$. Cette approximation pour le second
 param\`etre a une incidence n\'egligeable car $Z\in [2, p^2[$ et la probabilit\'e co\"incide avec la densit\'e.

\begin{heuristic} \label{heur4} {\it 
Soit $z\in [2, p[$ et soit $Z=z+ \lambda(z)\, p \in  [2, p^2[$ tel que $q_p(Z)=0$.
Soit $n \in [0, p-1[$~; alors la probabilit\'e d'avoir au moins $n$ valeurs $z_1, \ldots, z_n \in [2,p[$ telles que $q_p(z_j)=0$
(\'equivalent \`a $\lambda(z_j)=0$), pour $1 \leq j \leq n$, est~:
 
\centerline{ ${\rm Prob}\Big( \Big\vert \Big\{z \in [2,p[,\, q_p(z)=0 \Big\}\Big \vert  \geq n\Big) = 
\Frac{1}{p^{p-2}} \sm_{j=n}^{p-2} \hbox{$\binom{p-2} {j}$} (p-1)^{p-2-j} . $ }

\medskip
Plus g\'en\'eralement, on a pour tout $v \in [0, p[$~:

\centerline{ ${\rm Prob}\Big( \Big\vert \Big\{z \in [2,p[,\, \lambda(z) = v \Big\}\Big \vert  \geq n\Big) = 
\Frac{1}{p^{p-2}} \sm_{j=n}^{p-2} \hbox{$\binom{p-2} {j}$} (p-1)^{p-2-j} . $}  }
\end{heuristic}
 
\begin{lemma} \label{major}
On a pour tout $n$  la majoration $\Frac{1}{p^{p-2}} \sm_{j=n}^{p-2} \hbox{$\binom{p-2} {j}$} (p-1)^{p-2-j}  < 
\Frac{1}{p^n} \, \hbox{$\binom{p-2} {n}$}$.
\end{lemma}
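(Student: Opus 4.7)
Notre plan consiste à interpréter le membre de gauche comme la probabilité $\mathrm{Prob}(X \geq n)$ pour une variable aléatoire $X$ suivant la loi binomiale $\mathcal{B}(p-2, 1/p)$. En effet, on a $\mathrm{Prob}(X=j) = \binom{p-2}{j}(1/p)^j(1-1/p)^{p-2-j} = \binom{p-2}{j}(p-1)^{p-2-j}/p^{p-2}$, de sorte que la somme apparaissant dans l'énoncé coïncide avec $\mathrm{Prob}(X \geq n)$.

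L'étape centrale sera d'établir l'identité du moment factoriel $E\bigl[\binom{X}{n}\bigr] = \binom{p-2}{n}/p^n$, qui reproduit exactement le membre de droite. L'argument le plus transparent est combinatoire : écrivant $X = X_1 + \cdots + X_{p-2}$ comme somme de $p-2$ variables de Bernoulli indépendantes de paramètre $1/p$, on observe que $\binom{X}{n}$ compte le nombre de sous-ensembles $S \subseteq \{1,\ldots,p-2\}$ de cardinal $n$ tels que $X_i = 1$ pour tout $i \in S$. Par linéarité de l'espérance et indépendance, chacun de ces événements a probabilité $(1/p)^n$, d'où $E\bigl[\binom{X}{n}\bigr] = \binom{p-2}{n}(1/p)^n$. (On retrouve d'ailleurs cette identité par un calcul direct en utilisant $\binom{k}{n}\binom{p-2}{k} = \binom{p-2}{n}\binom{p-2-n}{k-n}$ suivi de la formule du binôme appliquée à $(1/p + (p-1)/p)^{p-2-n} = 1$.)

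Il restera à comparer $\mathrm{Prob}(X \geq n)$ à ce moment factoriel. Puisque $\binom{k}{n} \geq 1$ pour tout entier $k \geq n$, on a presque sûrement $\binom{X}{n} \geq \mathbf{1}_{X \geq n}$ ; en prenant l'espérance, il vient $\mathrm{Prob}(X \geq n) \leq E\bigl[\binom{X}{n}\bigr] = \binom{p-2}{n}/p^n$. L'inégalité est stricte dès qu'il existe $k$ avec $n < k \leq p-2$ et $\mathrm{Prob}(X=k) > 0$ (auquel cas $\binom{k}{n} > 1$ sur cet événement), ce qui est vérifié pour $1 \leq n \leq p-3$, cas pertinent pour l'application au principe de Borel--Cantelli où $n$ est fixe et $p \to \infty$. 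L'essentiel de la difficulté réside donc dans la vérification de l'identité du moment factoriel ; la conclusion n'est ensuite qu'une simple borne de type union.
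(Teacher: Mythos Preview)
Your proof is correct and takes a genuinely different route from the paper's. The paper argues purely algebraically: it sets $f_{N,n}(t)=\sum_{j=n}^{N}\binom{N}{j}(t-1)^{N-j}-\binom{N}{n}t^{N-n}$, observes the differential recursion $f'_{N,n}(t)=N\,f_{N-1,n}(t)$, and proves $f_{N,n}\le 0$ on $[1,\infty[$ by induction on $N$ from the base case $f_{n,n}\equiv 0$ and the boundary value $f_{N,n}(1)=1-\binom{N}{n}\le 0$; specializing $t=p$, $N=p-2$ gives the lemma. Your argument instead reads the left-hand side as ${\rm Prob}(X\ge n)$ for $X\sim\mathcal B(p-2,1/p)$, computes the factorial moment $E\bigl[\binom{X}{n}\bigr]=\binom{p-2}{n}/p^n$, and applies the pointwise inequality $\binom{X}{n}\ge\mathbf 1_{X\ge n}$.

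What each approach buys: the paper's induction is self-contained and makes no appeal to probability, but the recursion $f'_{N,n}=Nf_{N-1,n}$ is somewhat opaque. Your route is more conceptual in this context, since the heuristic surrounding the lemma already interprets the sum as a binomial tail; the bound then becomes a one-line union-type estimate once the factorial moment is identified. Your parenthetical direct computation (via $\binom{k}{n}\binom{p-2}{k}=\binom{p-2}{n}\binom{p-2-n}{k-n}$ and the binomial theorem) is in fact the unwound form of the paper's induction, so the two arguments are not unrelated; but the probabilistic packaging you give is cleaner and immediately explains \emph{why} the right-hand side is the natural majorant.

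One minor remark: as you correctly note, strict inequality fails at the endpoints $n=0$ and $n=p-2$ (both sides equal $1$ and $p^{-(p-2)}$ respectively), so the lemma's ``pour tout $n$'' should be read as $1\le n\le p-3$; this is harmless for the intended application.
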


\begin{proof}
On consid\`ere, pour $0\leq n \leq N$, $t \in [1, \infty[$, la d\'eriv\'ee de la fonction
 $f_{N,n} (t)=\sum_{j=n}^{N} \binom{N} {j} (t-1)^{N-j} - \binom{N} {n} t^{N-n}$~; elle est \'egale \`a $N f_{N-1,n} (t)$.
On raisonne ensuite par r\'ecurrence, \`a  partir de $f_{n,n} (t) = 0$ et de $f_{N,n} (1) <0$, pour montrer que la 
d\'eriv\'ee est n\'egative ou nulle sur tout l'intervalle $[1, \infty[$. On aura ensuite \`a poser $t=p$, $N=p-2$. 
\end{proof}

\begin{remark}\label{rema=1} On a, pour les petites valeur de $n$, la formule plus commode~:
$${\rm Prob}\Big(\Big\vert \Big\{z \in [2,p[,\, q_p(z)=0 \Big\} \Big\vert  \geq n\Big) = 
1 - \sm_{j=0}^{n-1} \hbox{$\binom{p-2} {j}$}\Frac{1}{p^j}\Big (1-\Frac{1}{p} \Big)^{p-2-j} , $$
et de m\^eme pour la condition $\lambda(z) = v$ \`a la place de $q_p(z)=0$ (cas $v = 0$).

\smallskip
La probabilit\'e d'avoir au moins une solution $z \in [2, p[$ est donc $1 - \Big(1-\Frac{1}{p} \Big)^p \Big(\Frac{p}{p-1}\Big)^2$
qui est rapidement proche de $1- e^{-1}\,\Big(\Frac{p}{p-1}\Big)^2$ donc de $1-e^{-1} \approx 0.63212$. Pour au moins 2 solutions
on obtient une probabilit\'e proche de $1- 2 \, e^{-1}\,\Big(\Frac{p}{p-1} \Big)^2\approx 0.264$~; pour au moins 3 (resp. 4) solutions
on obtient $0.0803$ (resp. $0.0189$).

\smallskip
La probabilit\'e d'avoir 0 solutions est donc $\Big(1-\Frac{1}{p} \Big)^p \Big(\Frac{p}{p-1} \Big)^2\approx 0.3678$. 
L'excel\-lence des r\'esultats num\'eriques accr\'edite l'existence d'une loi de probabilit\'e~binomiale.
\end{remark}

Pour $a\ll p$, ${\rm Prob}\big(q_p(a)=0\big)$ est conditionn\'ee \`a ${\rm Prob}\big(n \geq h \big)$, o\`u $h$ est la
partie enti\`ere de $\Frac{{\rm log}(p)}{{\rm log}(a)}$ (cf. \S\,\ref{hp})~; or le rapport 
$\Frac{{\rm Prob}\Big( \Big\vert \Big\{z \in [2,p[,\, q_p(z) =0 \Big\}\Big \vert  \geq h\Big) } {p^{-h} \, \hbox{$\binom{p-2} {h}$}} < 1$
tend vers une constante $C_\infty(a)$ en d\'ecroissant selon le r\'esultat suivant~:

\begin{lemma} \label{encadre}
On a pour tout $p$ l'encadrement (cf. Lemme \ref{major})~:

\medskip
\centerline{${\rm exp}\big(-1 + \frac{1}{p} (h + \frac{3}{2} )\,\big)<
\Frac{p^{-(p-2)} \sum_{j=h}^{p-2} \hbox{$\binom{p-2} {j}$} (p-1)^{p-2-j}  } {p^{-h} \, \hbox{$\binom{p-2} {h}$} } \leq 1$.}
\end{lemma}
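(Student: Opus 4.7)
Le plan est de traiter s\'epar\'ement les deux in\'egalit\'es de l'encadrement. D\'esignons par $R(p,h)$ le quotient figurant au centre. La majoration $R(p,h) \leq 1$ r\'esulte imm\'ediatement du Lemme \ref{major} appliqu\'e avec $n = h$, donc il n'y a rien \`a ajouter pour cette in\'egalit\'e (on obtient m\^eme $<$). Pour la minoration stricte, l'id\'ee sera d'isoler le terme dominant $j=h$ de la somme au num\'erateur puis de comparer l'expression obtenue \`a $\exp(-1 + (h+3/2)/p)$ via un d\'eveloppement de Taylor \'el\'ementaire de $\log(1-1/p)$.

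Concr\`etement, on r\'e\'ecrira
$$R(p,h) = \sm_{j=h}^{p-2} \frac{\binom{p-2}{j}}{\binom{p-2}{h}} \cdot \frac{(p-1)^{p-2-j}}{p^{p-2-h}},$$
le terme $j=h$ valant exactement $(1-1/p)^{p-2-h}$ et les autres termes \'etant positifs~; d'o\`u la minoration
$$R(p,h) \geq \Big(1-\frac{1}{p}\Big)^{p-2-h}.$$
Il restera alors \`a \'etablir
$$\Big(1-\frac{1}{p}\Big)^{p-2-h} > \exp\Big(-1 + \frac{h+3/2}{p}\Big),$$
in\'egalit\'e qui, par passage au logarithme, se ram\`ene \`a $(p-2-h)\log(1-1/p) > -1 + (h+3/2)/p$. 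Partant de $-\log(1-x) = \sm_{k\geq 1} x^k/k$ et en majorant la queue par une s\'erie g\'eom\'etrique, on dispose de l'in\'egalit\'e stricte $\log(1-x) > -x - x^2/(2(1-x))$ pour $x\in (0,1)$, qui donne pour $x=1/p$ la minoration $\log(1-1/p) > -1/p - 1/(2p(p-1))$. Multipliant par $p-2-h > 0$ et utilisant l'identit\'e $(p-2-h)/p = 1 - (2+h)/p$, on v\'erifiera par quelques simplifications \'el\'ementaires que l'in\'egalit\'e voulue se r\'eduit \`a $p-1 > p-2-h$, donc \`a $h > -1$, condition clairement satisfaite pour tout $h \geq 0$.

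La difficult\'e principale n'est pas de nature technique mais r\'eside dans le r\'eglage fin de la constante $3/2$ figurant dans l'exposant~: cette constante est pr\'ecis\'ement ajust\'ee pour que la minoration issue des deux premiers termes de la s\'erie de Taylor de $\log(1-1/p)$ soit \emph{juste} suffisante. Toute am\'elioration significative (par exemple le remplacement de $3/2$ par une constante plus grande) requerrait un d\'eveloppement \`a l'ordre trois, avec un terme correctif en $1/p^2$, sans modifier la structure globale de l'argument.
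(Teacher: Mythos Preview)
Your argument is correct and follows essentially the same route as the paper: both proofs reduce the lower bound to $R(p,h)\geq (1-1/p)^{p-2-h}$ and then finish via the Taylor expansion of $\log(1-1/p)$, arriving at the same sufficient condition $p-1>p-2-h$, i.e.\ $h>-1$. Your extraction of the $j=h$ term is a more direct way to reach the intermediate bound than the paper's longer algebraic rewriting of the sum, and your handling of the Taylor tail (bounding by a geometric series to get $\log(1-1/p)>-1/p-1/(2p(p-1))$) is more explicit than the paper's ``tous les termes n\'eglig\'es \'etant positifs'', but the substance is identical. One tiny point: you invoke ``multipliant par $p-2-h>0$'', which holds in the intended regime $h=\lfloor\log p/\log a\rfloor$ with $a\geq 2$ and $p$ not too small, but the boundary case $p-2-h=0$ (where $R=1$ and the desired inequality becomes $1>\exp(-1/(2p))$) is trivial and worth a one-line mention.
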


\begin{proof} On a la minoration $\ \ \Frac {p^h} {\hbox{$\binom{p-2} {h}$} }
\times \Frac{1} {p^{p-2}}\sm_{j=h}^{p-2} \hbox{$\binom{p-2} {j}$} (p-1)^{p-2-j}$
\begin{align*}
 = \ &  \Big(\Frac{p-1}{p}\Big)^{p-2}\,
 \Frac {p^h \,h!}{(p-1-h) \cdots (p-1-1)} \ \sm_{j=h}^{p-2} \ \Frac{1}{j!} \,\Frac{p-1-j}{p-1} \cdots \Frac{ p-1-1}{p-1 } \\
  =\  &\Big(\Frac{p-1}{p}\Big)^{p-2}\,
  \Frac {p^h}{(p-1)^h} \ \sm_{j=h}^{p-2} \ \Frac{ h!}{j!} \,\Frac{p-1-j}{p-1-h} \cdots \Frac{ p-1-1}{p-1-1 } \times \Frac{1}{(p-1)^{j-h}}\\
 =\  &\Big(\Frac{p-1}{p}\Big)^{p-2-h}\,\Big[ 1+ \Frac{p-1 -(h+1)}{(p-1)(h+1)} + \cdots +\Frac{p-1 -(h+1)}{(p-1)(h+1)}  \, \cdots \, \Frac{p-1 -j}{(p-1)\,j}  \\
&\hspace{6cm} 
 + \cdots  + \Frac{p-1 -(h+1)}{(p-1)(h+1)}  \, \cdots \, \Frac{p-1 -(p-2)}{(p-1)(p-2)} \Big]\\
 > \ &\Big(\Frac{p-1}{p}\Big)^{p-2-h} =  \Big(1 -\Frac{1}{p}\Big)^{p-2-h}. \  \hbox{D'o\`u facilement le r\'esultat en consid\'erant~:}
\end{align*}

$(p-2-h) \,{\rm log}\Big(1-\Frac{1}{p}\Big) = -(p-2-h) \,\Big(\Frac{1}{p} + \Frac{1}{2p^2}+ \cdots\Big )
> -1 + \Frac{1}{p}\Big(h+ \Frac{3}{2} \Big)$,
tous les termes n\'eglig\'es \'etant positifs et tendant rapidement vers 0.
\end{proof}

La constante $C_\infty(a)$ est voisine de $e^{-1} \approx 0.36788$, et pour $p \to \infty$ on peut \'ecrire~:
$${\rm Prob}\Big(\Big\vert \Big\{z \in [2,p[,\, q_p(z)=0\, \Big\} \Big\vert  \geq h\Big) \approx C_\infty(a) \times 
\Frac{1}{p^h} \, \hbox{$\binom{p-2} {h}$} \approx O \Big( \Frac{1}{p^{ {\rm log}_2(p)/ {\rm log}(a)}} \Big), $$

ordre de grandeur  qui sera obtenu au niveau de la preuve du Th\'eor\`eme \ref{theoremconv}.

\smallskip
Par exemple, pour $a=2$, $p = 100000007$, on obtient un rapport (effectivement majorant) de $0.3820$ au lieu de $0.36788$. 
Pour $p = 100003$  on obtient $0.3908$.
On a utilis\'e le programme suivant~:

\smallskip
\footnotesize 
$\{$$a=2; p=nextprime(10^3); print(p); h=floor(log(p)/log(a)); S=0.0; $\par$
for(k=1, p-2-h, S=(S+1)*k/((p-1)*(p-1-k))  ); S=S+1; print(exp(-1)*S)$$\}$
\normalsize

\begin{example}\label{ex4}
Donnons, sous les heuristiques pr\'ec\'edentes, des calculs exacts de probabilit\'es d'avoir au moins $h$
solutions, o\`u $h$ est la partie enti\`ere de $\frac{{\rm log}(p)}{{\rm log}(a)}$ (ici avec $a=2$) et o\`u $p$ est arbitrairement grand~; 
ceci correspondrait au cas o\`u le quotient de Fermat de $a$ serait nul pour une infinit\'e de $p$ et il convient de voir que c'est 
num\'eriquement incompatible. 
On \'ecrit alors cette probabilit\'e sous la forme~$\frac{1}{p^{1+\epsilon}}$~:

\medskip
\footnotesize 
$\{$$p=nextprime(10^6); S=0.0; $\par$
 for(j=0, log(p)/log(2), S=S+binomial(p-2, j)*(1-1/p)^{(p-2-j)} / p^j  ); $\par$
 print(p,"   ",1-S,"   ",-1-log(1-S)/log(p)) $$\}$  
\begin{align*}
&p=101    &\hbox{probabilit\'e} &= 6.269 \times 10^{-5} &     & \epsilon  = 1.097\\
&p=127   &\hbox{probabilit\'e} &= 6.655 \times 10^{-5} &     & \epsilon  = 0.985 \\
&p=10007   &\hbox{probabilit\'e} &= 4.473 \times 10^{-12} &     & \epsilon  = 1.837\\
&p=200003   &\hbox{probabilit\'e} &= 6.059\times 10^{-17} &      & \epsilon  =  2.059\\
&p=1000003 &\hbox{probabilit\'e} &= 1.587 \times 10^{-19} &   & \epsilon  =   2.133
\end{align*}
\end{example}

\normalsize
On confirmera dans la section suivante que cette probabilit\'e est rapidement inf\'e\-rieure \`a $\frac{1}{p^2}$
et m\^eme que $\epsilon$ tend vers l'infini tr\`es lentement. Pour les petites valeurs de $p$, $\epsilon$ oscille autour de 1
et la derni\`ere valeur de $p$ pour laquelle $\epsilon<1$ est $p=127$.

\subsection{Heuristique principale sur $q_p(a)=0$}\label{hp}
Soit maintenant $a\geq 2$ fix\'e. 
 L'\'ev\'ene\-ment $q_p(a)=0$ (o\`u $p$ assez grand est la variable al\'eatoire) 
est \'equivalent au suivant, o\`u $h \geq 1$ est la partie enti\`ere de $\Frac{{\rm log}(p)}{{\rm log}(a)}$~:

\smallskip
{\it Il existe au moins $h$ entiers $z_1, \ldots, z_h$ de $[2,p[$ tels que  $\lambda(z_j)=0$ (i.e., $q_p(z_j)=0$) pour $j=1, \ldots, h$,
et il existe un indice $j_0$ tel que $z_{j_0} = a$.  }

\medskip
Si $q_p(a)=0$, l'existence des $h$ \'el\'ements $z_j \in [2, p[$  tels que  $\lambda(z_j)=0$ avec $z_{j_0} = a$
en r\'esulte trivialement ($z_j=a^j \in [2, p[$ pour $j=1,\ldots, h$).
Inversement, sous l'exis\-tence de $h$ \'el\'ements  $z_j$ tels que  $\lambda(z_j)=0$, la seule condition 
$\{z_1, \ldots, z_h \}$ contient $a$ entra\^ine  $q_p(a)=0$.

\begin{remark}\label{rema22} 
 L'existence de $n$ valeurs $z_j \in [2, p[$ telles que $q_p(z_j)=0$ ne d\'epend que de $p$ (et de $n$) et non du choix
d'un entier $a$ (fait a posteriori). Ceci dit, il y a de fortes chances que ce soit d\^u \`a l'existence d'un $a \ll p$
tel que $q_p(a)=0$. Cette derni\`ere probabilit\'e ($\{z_1, \ldots, z_h \}$ contient $a$) est difficile \`a estimer, 
aussi nous la majorerons par 1 (si $a \notin \{z_1, \ldots, z_h\}$, on obtient plus de $h$ solutions, ce qui est peu probable).

\smallskip
 Il est clair que les $p$ pour lesquels le nombre $n$ de solutions dans $[2, p[$ est tr\`es petit conduisent \`a $q_p(b)\ne 0$ 
pour tout $b<p^{\frac{1}{n+1}}$, $b\ne 1$ (cf. \S\,\ref{subdens}).

Le cas de $h$ solutions donn\'ees par les puissances de $a$ peut \^etre consid\'er\'e comme un cas tr\`es particulier (probabilit\'e conditionnelle) du cas de $h$ solutions ind\'ependantes dont la probabilit\'e reste 
$\Frac{1}{p^{p-2}} \sm_{j=h}^{p-2} \hbox{$\binom{p-2} {j}$} (p-1)^{p-2-j}$.
On obtient alors dans ce contexte (cf. Heuristique \ref{heur4})
${\rm Prob}\big(q_p(a) = 0\big) < \frac{\binom{p-2} {h}}{p^h}$ et
${\rm Prob}\big(q_p(a) = 0\big) \approx C_\infty(a) \times \frac{\binom{p-2} {h}}{p^h}$ (cf. Lemme \ref{encadre}).
Pour $p<a$, $h=0$, et $\frac{\binom{p-2} {h}}{p^h}=1$~; donc
il est pr\'ef\'erable, dans l'optique de l'\'etude de la sommation sur $p$, d'utiliser la densit\'e 
$\sm_{d \div p-1} \Frac{\varphi(d)^2}{p\,(p-1)^2}$ \'etudi\'ee Section \ref{section3}.
\end{remark}

\begin{theorem}\label{theoremconv} Soit $a \geq 2$.
 La s\'erie $\sm_{p\geq 2}\  \Frac{\binom{p-2} {h}}{p^h}$, o\`u $h$ est la partie enti\`ere de 
$\Frac{{\rm log}(p)}{{\rm log}(a)}$, est convergente.
\end{theorem}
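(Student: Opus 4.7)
The plan is to establish a rapidly decaying pointwise upper bound on $\Frac{\binom{p-2}{h}}{p^h}$ and then show that this bound is summable by comparison with a very fast-decaying series.

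First I would use the elementary inequality $\binom{p-2}{h} \leq \Frac{(p-2)^h}{h!} \leq \Frac{p^h}{h!}$, which yields the clean pointwise majoration
\[
\Frac{\binom{p-2}{h}}{p^h} \leq \Frac{1}{h!} \qquad (h\geq 0).
\]
For $p<a$ one has $h=0$ and the term equals $1$, but this only concerns finitely many primes and may be absorbed into a constant. For $p\geq a$ (so $h\geq 1$) the issue reduces to controlling $1/h!$ as $p\to \infty$.

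Next I would invoke Stirling's formula in the form $h! \geq \bigl(\Frac{h}{e}\bigr)^h \sqrt{2\pi h}$, giving
\[
\Frac{1}{h!} \leq \Frac{1}{\sqrt{2\pi h}}\Big(\Frac{e}{h}\Big)^{h}.
\]
Taking logarithms and inserting $h = \lfloor \log(p)/\log(a)\rfloor$, one obtains, for $p$ sufficiently large,
\[
-\log(h!) \leq -h\,\log(h) + h + O(\log h) = -\Frac{\log(p)}{\log(a)}\,\bigl(\log_2(p) - \log_2(a) - 1\bigr) + O(\log_2(p)).
\]
Exponentiating, this produces a bound of the form
\[
\Frac{\binom{p-2}{h}}{p^h} \leq \Frac{1}{h!} \leq p^{-\psi(p)},\qquad \psi(p) := \Frac{\log_2(p)}{\log(a)}\,(1+o(1)),
\]
where $\log_2 = \log\circ\log$. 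In particular, for $p$ large enough one has $\psi(p)\geq 2$, and a fortiori $\psi(p)\to\infty$ as $p\to\infty$.

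Finally I would conclude by comparing $\sum_p p^{-\psi(p)}$ to $\sum_p 1/p^{2}$ (or, more generously, to the convergent reference series of Lemma \ref{infty}): since $\psi(p)\geq 2$ eventually, the tail of the series is dominated term-by-term by $\sum_p 1/p^2$, which converges. The few initial terms with $p<a$ or $\psi(p)<2$ are finite in number and contribute a bounded amount. The main (minor) technical point is the bookkeeping in passing from $h\sim \log(p)/\log(a)$ to an explicit exponent of $p$ via Stirling; once that is in place, the convergence is comfortable and in fact extremely fast — the decay is super-polynomial, which is why the heuristic conclusion ${\rm Prob}(q_p(a)=0)=O(p^{-1-\epsilon(p,a)})$ with $\epsilon(p,a)\to\infty$ becomes plausible in the subsequent Borel--Cantelli argument.
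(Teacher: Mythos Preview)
Your proof is correct and follows essentially the same route as the paper: bound $\binom{p-2}{h}/p^h$ by $1/h!$, apply Stirling to rewrite $1/h!$ as $p^{-Y}$ with $Y\sim \log_2(p)/\log(a)\to\infty$, and conclude by comparison with a convergent $p$-series. The only cosmetic difference is that the paper handles the floor in $h$ by passing to the majorant $h/h! = 1/\Gamma(h)$ with $h=\log(p)/\log(a)$ continuous, whereas you absorb the $O(1)$ discrepancy directly into the $O(\log_2 p)$ error term.
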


\begin{proof} On a $\binom{p-2} {h} = \Frac{1}{h !} \times (p-1-1)  \cdots (p-1-h)$ 
que l'on peut majorer par $\Frac{1}{h !} \times p^h$. En outre, on a par d\'efinition
$\frac{{\rm log}(p)}{{\rm log}(a)} -1 < h < \frac{{\rm log}(p)}{{\rm log}(a)}$.
 Pour tenir compte de ce fait et afin d'utiliser analytiquement $\frac{{\rm log}(p)}{{\rm log}(a)}$
 au lieu de $h$ dans les formules, on utilise la majoration 
$\sm_{p\geq 2}\  \Frac{\binom{p-2} {h}}{p^h} < \sm_{p\geq 2} \Frac{h}{h !}$,
o\`u l'on a remplac\'e $\Frac{1}{h!}$ par le majorant 
$1\big/  \big(\frac{{\rm log}(p)}{{\rm log}(a)}-1\big)! =
\frac{{\rm log}(p)}{{\rm log}(a)} \big/  \big(\frac{{\rm log}(p)}{{\rm log}(a)}\big)!$,
$h$ d\'esignant maintenant $\frac{{\rm log}(p)}{{\rm log}(a)}$~; d'o\`u 
$\Frac{h}{h !}= \frac{1}{\Gamma(h)}$.

\smallskip
On a $h! =h\,\Gamma(h)= \sqrt{2\pi h} \times h^h e^{-h}\times (1 +O(\frac{1}{h}))$ et 
$\Frac{h !}{h} =  \sqrt{2\pi}\times h^{h-\frac{1}{2}} \, e^{-h}\times (1 +O(\frac{1}{h}))$. 
\begin{align*} \hbox{Or~:\ }
{\rm log}\Big(\Frac{h!}{h}\Big) &= {\rm log}\big (\sqrt{2\pi} \big) +\Big (h-\Frac{1}{2}\Big) {\rm log}(h) -h 
+ {\rm log}\Big(1 +O\big(\Frac{1}{h}\big)\Big) \\
&=  {\rm log}(\sqrt{2\pi}) + h ( {\rm log}(h) -1) - \Frac{1}{2}  {\rm log}(h) + O\big(\Frac{1}{h} \big) \\
& =  {\rm log}(\sqrt{2\pi}) +\Frac{1}{{\rm log}(a)}  {\rm log}(p)\Big (  {\rm log}_2(p) -  {\rm log}_2(a) -1 \Big) \\
&\hspace{1.0cm}  - \Frac{1}{2} \Big( {\rm log}_2(p) -  {\rm log}_2(a) \Big) + O\big(\Frac{1}{ {\rm log}(p)} \big)  \\
&=  \Big[ \Frac{1}{ {\rm log}(a)} \Big ( {\rm log}_2(p) -  {\rm log}_2(a) -1\Big)   \\
& \hspace{1.0cm}  - \Frac{1}{2} \Frac{1}{ {\rm log}(p)}\Big ({\rm log}_2(p) - 
 {\rm log}_2(a) \Big) + \Frac{O(1)}{ {\rm log}(p)}  \Big]\,  {\rm log}(p)  =: Y\times  {\rm log}(p)\,.
\end{align*}
D'o\`u $\Frac{h}{h!} = \Frac{1}{p^Y}$ o\`u $Y$ tend vers l'infini comme $\Frac{{\rm log}_2(p)}{ {\rm log}(a)}$.
Par cons\'equent, il existe une constante $C>1$ telle que $Y$ est minor\'ee par $C$ pour tout $p\geq p_0$ assez grand
et on peut \'ecrire $\sm_{p\geq 2}\  \Frac{\binom{p-2} {h}}{p^h} < C_0 + \sm_{p > p_0}\Frac{1}{p^C}$,
o\`u $C_0$ est une constante \'egale \`a la sommation partielle jusqu'\`a $p_0$~;
d'o\`u la convergence de la s\'erie intiale.
\end{proof}

\begin{heuristic}\label{heur11}    {\it 
Soit $a \geq 2$ fix\'e~; alors on a ${\rm Prob}\big(q_p(a) = 0\big) \approx C_\infty(a) \times \frac{\binom{p-2} {h}}{p^h}$,
o\`u $C_\infty \approx 0.36788$, $h$ est la partie enti\`ere de $\Frac{{\rm log}(p)}{{\rm log}(a)}$,
et  dans le cadre du principe de Borel--Cantelli,  le nombre de $p$ tels que $q_p(a) = 0$ est major\'e par la limite 
de la s\'erie  $S :=s_0 + \sm_{p>a} \ \Frac{\binom{p-2} {h}}{p^h}$,
o\`u $s_0 \approx \sm_{p<a}\ \sm_{\ d \div p-1} \Frac{\varphi(d)^2}{p\,(p-1)^2} < \sm_{p<a} \Frac{1}{p}$. }
\end{heuristic}

Noter que la majoration utilis\'ee pour le Th\'eor\`eme \ref{theoremconv} est assez grossi\`ere car la s\'erie 
$\sum_{p\geq 2}\frac{1}{p^h}\binom{p-2}{h}$
converge vers $0.9578...$ (pour $a=2$) tandis que
$\sum_{p\geq 2}\Frac{h}{h!}$ converge vers $6.2761..$. Par cons\'equent, la s\'erie de d\'epart 
$\sum_{p\geq 2}\Frac{1}{p^{p-2}} \sum_{j=h}^{p-2} \hbox{$\binom{p-2} {j}$} (p-1)^{p-2-j}$ converge vers 
$C_\infty(2)\times 0.9578... \approx 0.35237$.
Ces  constantes augmentent rapidement avec $a$.

\smallskip
 Le fait que l'on puisse choisir $C$ arbitrairement grande (\`a condition de sommer \`a
partir d'un $p_0$ assez grand) montrerait la rar\'efaction des solutions pour $p \to \infty$. 

\smallskip
Par exemple, si $a=2$ et si l'on veut
atteindre $C>1$, il faut avoir $p_0 \geq 79$~; pour $C>2$, il faut $p_0 \geq 4259$. Pour $a=3$, il faut respectivement
$p_0 \geq 24527$ et $p_0 \geq 2669180065451$. Pour $a=5$ et $C \approx 1.05$, $p_0=168116638259$
(peut-on y voir un rapport avec l'exemple $(5, 188748146801)$ donn\'e au \S\,\ref{ex3} ?).

\smallskip
Ces r\'esultats sont obtenus avec le programme suivant qui concerne la s\'erie majorante 
$\sm_{p\geq p_0} \Frac{h}{h !} \approx \sm_{p\geq p_0}\Frac{1}{p^Y}$, donc
les $p_0$ obtenus sont des majorants des bornes n\'eces\-saires pour avoir une s\'erie initiale convergente
comme celle de terme g\'en\'eral~$\Frac{1}{p^Y}$~:

\smallskip\footnotesize 
$\{$$a=5; print(nextprime(solve(x=10^2, 10^{12},  $\par$
 (log(log(x))-log(log(a))-1)/log(a)- (log(log(x))-log(log(a)))/log(x^2) - 1.05   )))$$\}$

\medskip\normalsize
Cette heuristique \ref{heur11} donne une version sans doute trop favorable du probl\`eme, mais elle est assez bien
v\'erifi\'ee par l'exp\'erimentation num\'erique. Le paragraphe suivant, qui utilise des r\'esultats de densit\'es,
peut pr\'eciser cet aspect.

\subsection{Etude \`a l'infini}\label{subinfini}
D'apr\`es les r\'esultats des \S\S\,\ref{subtild}, \ref{subdfp}, pour $a$ fix\'e
on est amen\'e \`a \'etudier le produit infini formel $\widetilde{\mathcal P}(a) := \prod_{m \geq 1} \widetilde\Phi_m(a)$ qui est tel que tout nombre premier $p\notdiv a$ en est un diviseur, \`a savoir $p\div \widetilde\Phi_m(a)$ pour l'unique indice $m=o_p(a)$ 
(cf. Lemmes \ref{dec1}, \ref{dec2}),
et qui est tel que $q_p(a) \ne 0$ si et seulement si $p^2$ ne divise pas $\widetilde{\mathcal P}(a)$.
Pour \'etudier les $q_p(A)$ non nuls en termes de densit\'es, on va consid\'erer les densit\'es des $A\in \N$ tels que 
$p^2 \notdiv \widetilde {\mathcal P}(A)$ (cf. Section~\ref{section3}).

\smallskip
Comme $p \div \widetilde {\mathcal P}(A)$ est \'equivalent \`a $p\div \widetilde \Phi_{o_p(A)}(A)$,
la densit\'e des $A\in \N$ tels que $p^2 \div \widetilde {\mathcal P}(A)$
est \'egale \`a $\Frac{\varphi(o_p(A))}{p^2}$ et en sommant sur tous les ordres possibles $o_p(A)$ diviseurs de $p-1$, on obtient la densit\'e
$\Frac{p-1}{p^2}$~; la densit\'e contraire ($p^2 \notdiv \widetilde {\mathcal P}(A)$) est \'egale \`a
$D_p := 1 -\Frac{p-1}{p^2} = 1- \Frac{1}{p}+ \Frac{1}{p^2}$. On note que ces $p$-densit\'es sont ind\'ependantes
(en raison des propri\'et\'es des $\widetilde\Phi_m(a)$) et que la densit\'e
correspondant \`a plusieurs $p$ est donn\'ee par le produit des densit\'es locales (voir ci-dessous la Remarque \ref{remadens}).

\smallskip
Il convient d'\'etudier le produit $\prd_{p \leq x} D_{p}$ qui donne la densit\'e des $A\in \N$ tels que 
$p^2  \notdiv \widetilde{\mathcal P}(A)$ pour tout $p \leq x$.
Noter que seules les valeurs de $m$ de la forme $o_p(A)$, pour un $p \leq x$, sont concern\'ees dans le produit infini.

\smallskip
Ecrivons $1- \Frac{1}{p}+ \Frac{1}{p^2}  = \Big (1- \Frac{1}{p}\Big)\Big (1+ \Frac{1}{p(p-1)}\Big)$.
On a~:
$$\prd_{p \leq x}  \Big  (1 -\Frac{1}{p}\Big ) = 
\Frac{e^{-\gamma}}{{\rm log}(x) } \times \Big( 1 + O\big (\hbox{$\frac{1}{{\rm log}(x)}$} \big)\Big), $$
o\`u $\gamma \approx 0,577215$ est la constante d'Euler (cf.  \cite{T1}, \S\,I.1.6, formule de Mertens), et
$$\prd_{p \leq x}  \Big  (1+ \Frac{1}{p(p-1)}\Big ) \approx 1.9436, $$
 d'o\`u~:
$$\prd_{p \leq x} D_{p} \approx \Frac{1.9436 \times e^{-\gamma}}{{\rm log}(x) } \times
 \Big( 1 + O\big (\hbox{$\frac{1}{{\rm log}(x)}$} \big)\Big) \approx \Frac{1.09125} {{\rm log}(x) } \times
 \Big( 1 + O\big (\hbox{$\frac{1}{{\rm log}(x)}$} \big)\Big). $$

 On a donc le r\'esultat analytique suivant~:

\begin{theorem} \label{theo2}  La densit\'e des $A \in \N \Sauf \{0\}$ satisfaisant aux propri\'et\'es locales~: 
``$q_p(A) \ne 0$ pour tout premier $p \leq x$'', est de l'ordre de $\Frac{O(1)}{{\rm log}(x)}$.
De fa\c con pr\'ecise~:

\medskip
\centerline{$\lim\limits_{\substack{\ \ y \to \infty}} \Frac{1}{y}\,\Big\vert \big\{  A \leq y, \ \ q_p(A)\ne 0 ,\ \forall p \leq x \big\} \Big\vert 
= \Frac{O(1)}{{\rm log}(x)} \approx  \Frac{1.09125}{{\rm log}(x)}. $}
\end{theorem}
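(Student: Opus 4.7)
L'id\'ee est de r\'eduire la condition globale ``$q_p(A) \ne 0$ pour tout $p\leq x$'' \`a une collection de conditions locales ind\'ependantes modulo $p^2$, de calculer chaque densit\'e locale $D_p$, puis de multiplier en invoquant la formule de Mertens pour le facteur $\prod(1-1/p)$.

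Premi\`erement, d'apr\`es le Th\'eor\`eme~\ref{wtheorem} et le Lemme~\ref{dec2}, on a les \'equivalences
$q_p(A)\ne 0 \Longleftrightarrow p^2 \notdiv \widetilde \Phi_{o_p(A)}(A) \Longleftrightarrow p^2 \notdiv \widetilde{\mathcal P}(A)$,
puisque $p$ ne divise qu'un seul facteur du produit formel. Cette condition ne d\'epend que du r\'esidu de $A$ modulo $p^2$, car $o_p(A)$ est d\'etermin\'e par $A\!\pmod p$ et $\widetilde \Phi_m(A)$ par $A\!\pmod {p^2}$. En groupant les $A\in \N\Sauf p\N$ selon leur ordre $d = o_p(A)\div p-1$ et en utilisant la Proposition~\ref{cp} (qui donne $c_p = \varphi(d)$ solutions modulo $p^2$ \`a $p^2\div \Phi_d(A)$), on obtient la $p$-densit\'e locale
$$D_p = 1 - \sum_{d\div p-1}\frac{\varphi(d)}{p^2} = 1 - \frac{p-1}{p^2}.$$

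Ensuite, le th\'eor\`eme des restes chinois appliqu\'e aux modules $p^2$ pour $p\leq x$ assure l'ind\'ependance de ces conditions locales, d'o\`u la densit\'e globale cherch\'ee vaut $\prod_{p\leq x} D_p$. On factorise $D_p = (1-\frac{1}{p})(1+\frac{1}{p(p-1)})$~; la formule de Mertens $\prod_{p\leq x}(1-\frac{1}{p}) = \frac{e^{-\gamma}}{{\rm log}(x)}(1 + O(1/{\rm log}(x)))$, combin\'ee \`a la convergence absolue de $\prod_p(1+\frac{1}{p(p-1)})\approx 1.9436$ (issue de $\sum_p \frac{1}{p(p-1)} < \infty$), fournit l'\'equivalent $\approx 1.09125/{\rm log}(x)$ annonc\'e.

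L'obstacle principal est la justification rigoureuse de l'ind\'ependance des \'ev\'enements $\{p^2\notdiv \widetilde{\mathcal P}(A)\}$ pour des premiers distincts $p_1,\ldots, p_k \leq x$, le produit formel $\widetilde{\mathcal P}$ \'etant infini. Cette difficult\'e est lev\'ee en observant, pour chaque $p$ fix\'e, que la condition ne concerne que le facteur $\widetilde \Phi_{o_p(A)}(A)$, lui-m\^eme fonction de $A\!\pmod{p^2}$~; on peut alors appliquer le th\'eor\`eme chinois aux modules $p_1^2,\ldots, p_k^2$ pour obtenir effectivement le produit $\prod_{p\leq x} D_p$. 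Le calcul num\'erique de la constante $\prod_p(1+\frac{1}{p(p-1)})$ demandera par ailleurs un contr\^ole soign\'e de la queue du produit, mais ne pose pas de difficult\'e th\'eorique.
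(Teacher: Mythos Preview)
Your proposal is correct and follows essentially the same route as the paper: compute the local density $D_p = 1 - (p-1)/p^2$ via the cyclotomic equivalence $q_p(A)\ne 0 \Leftrightarrow p^2\notdiv \widetilde\Phi_{o_p(A)}(A)$, justify the product $\prod_{p\leq x} D_p$ by the Chinese remainder theorem (exactly as in Remarque~\ref{remadens}), then factor $D_p = (1-\tfrac{1}{p})(1+\tfrac{1}{p(p-1)})$ and apply Mertens' formula together with the convergent product $\prod_p(1+\tfrac{1}{p(p-1)})\approx 1.9436$. The paper's argument is identical in structure and detail.
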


\smallskip
\begin{remark}\label{remadens} De fait il existe un calcul direct de cette densit\'e par d\'enombre\-ment
de type th\'eor\`eme chinois (cf. Remarque \ref{rema11}) avec cette fois des $B_p^j$ tels que $q_p(B_p^j) \ne 0$, 
et ceci pour la suite des nombres premiers $p\leq x$.
Si $y=\prod_{p\leq x} p^2$, un calcul standard montre que le nombre de $A \in [1, y[$ tels que $q_p(A) \ne 0$ 
pour tout $p \leq x$ est exactement $\prod_{p \leq x} (p^2 - p + 1)$, en notant que $A$ est par nature
non \'etranger \`a $\prod_{p \leq x} p$~; d'o\`u la densit\'e pr\'ec\'edente exacte sur les intervalles de la forme
$\big [1,\prod_{p\leq x} p^2 \big [$.
Ceci constitue une importante v\'erification des r\'esultats de la Section \ref{section3} et montre que la conjecture $ABC$
n'est pas n\'ecessaire dans ce cadre cyclotomique.
\end{remark}

Bien que $y$ doive \^etre pris tr\`es grand par rapport \`a $x$, on peut tester la r\'epartition des solutions sur de petits intervalles
en utilisant le programme suivant~:

\medskip
\footnotesize 
$\{$$N=0; y=10^4;  x=10^7; A=1; while(A<=y, A=A+1; p=0; q = 1; $\par$
while(p<=x \& q!=0, p=nextprime(p+1); p2=p^2; $\par$
Q=Mod(A,p2)^{(p-1)}-1; q=component(Q,2)) ; if(q!=0, N=N+1));  print(N)$$\}$

\medskip
\normalsize
Par exemple, pour $1 <A \leq y=10^4$, on trouve $665$ valeurs de $A$ telles que $q_p(A) \ne 0$ 
pour tout  $p \leq x=10^7$. Or $10^4\,. \,\Frac{1.09}{{\rm log}(10^7)} \approx 676$.

Du fait que le programme compte les plus petites solutions $A$ \`a $q_p(A) \ne 0$ pour tout $p\leq x$, 
sans doute moins nombreuses\,\footnote{\, La relation
$q_p(a)=0$ engendre les solutions $a^j \in [2, p[$, $j=1,\ldots, h$, qualifi\'ees 
d'exception\-nelles (cf. \S\,\ref{num2}), et qui sont ici d\'ecompt\'ees des $A$ telles que $q_p(A)\ne 0 ,\ \forall p \leq x$.},
le r\'esultat est assez satisfaisant.
Prenons $x \approx 10^{10}$, accessible aux calculs~; on a $\Frac{1.09}{{\rm log}(10^{10})} \approx 0.05$.
Pour les entiers $A \in \N \Sauf \{0, 1\}$, il y en a $95\%$ tels que $q_p(A)=0$ pour au moins un $p\leq 10^{10}$. 
Ceci est compatible avec une heuristique de finitude~; les exemples de $a=47$ et $72$ semblent \^etre int\'eressants 
de ce point de vue (cf. \S\,\ref{ex3}).

\smallskip
Cette \'etude est de type ``densit\'e'' et n'informe que tr\`es partiellement sur le cas d'une valeur $a$ fix\'ee une fois pour toutes.

\subsection{Heuristique de finitude}
On peut enfin envisager l'heuristique assez radicale suivante, en tenant compte des r\'esultats du \S\,\ref{hp}~: 

\begin{heuristic} \label{heur3} {\it 
 Soit $a \in\N \Sauf\{0, 1\}$ un entier fix\'e. Le nombre de quotients de Fermat $q_p(a)$ nuls
est en moyenne \'egal \`a 2 ou 3. }
\end{heuristic}

Le programme suivant donne $2.76$ solutions $p < 3 \times 10^9$ en moyenne pour $2 \leq a \leq 101$,
et $2.80$ solutions $p < 10^9$ pour $10^9+1 \leq a \leq 10^9 + 100$~:

\medskip
\footnotesize 
$\{$$N = 0; b=1; B= 10^8; for(a=b+1, b+100, if(Mod(a,4)==1, N=N+1)); $\par$
p= 1; while(p < B, p=nextprime(p+2); p2=p^2; $\par$
for(a=b+1, b+100, Q=Mod(a,p2)^{(p-1)} ; if(Q==1, N=N+1) ) ); print(N/100.0)$$\}$

\medskip
 \normalsize
Le fait de cumuler une centaine de valeurs de $a$ semble indispensable au vu de la r\'epar\-tition tr\`es incertaine
des solutions $p$ \`a $q_p(a)=0$ pour un seul $a$.

\smallskip
L'exp\'erimentation num\'erique (en d\'epit du fait que l'on a des ph\'enom\`enes qui exi\-gent des intervalles \`a 
croissance exponentielle) semble limiter le nombre de $q_p(a)=0$ \`a quelques unit\'es en moyenne
portant en premier lieu sur de petits $p$ (r\'esultant de congruences du type $a \equiv 1 \pmod {p^2}$)
puis \'eventuellement sur un petit nombre de grandes solutions, accessibles aux ordinateurs actuels, dont la probabilit\'e 
serait de l'ordre de $\frac{1}{p^2}$ et tendrait rapidement vers 0 pour les tr\`es grands nombres premiers comme 
l'heuristique principale semble l'indiquer (cf. Heuristique \ref{heur11}, Th\'eor\`eme \ref{theoremconv}).

\section{Conclusion}\label{section5}
N'\'etant pas familier de la th\'eorie analytique des nombres, j'ignore si l'on peut envisager des confirmations ou infirmations 
des heuristiques propos\'ees. 

\smallskip
L'Heuristique \ref{heur2} est probablement tr\`es raisonnable, mais est insuffisante pour conclure \`a la finitude des $p$
tels que $q_p(a)=0$ ($a$ fix\'e). Si elle est exacte, elle montre que la probabilit\'e $\frac{1}{p}$, souvent admise, pose probl\`eme.

\smallskip
 L'Heuristique \ref{heur4}, qui stipule l'existence
d'une loi de probabilit\'e binomiale pour ${\rm Prob}\big( q_p(z)=0 \big)$, $z \in [2, p[$, reste le point sensible en raison
de l'existence possible de nombres $a \ll p$ tels que $q_p(a^j)=0$ pour $j=1, \ldots, h$, o\`u $h$ est la partie enti\`ere de
$\frac{{\rm log}(p)}{{\rm log}(a)}$. Dans ce cas, l'abondance de solutions (car $a^j \in [2, p[$ pour $j = 1, \ldots, h$) induit 
une r\'epartition exeptionnelle des solutions qui peut \^etre interpr\^et\'ee de deux fa\c cons~: ou bien cette loi de probabilit\'e 
n'est pas la bonne, ou bien il n'est pas possible que pour $a$ fix\'e ($a=2$ par exemple) on ait une infinit\'e de solutions $p$ 
\`a $q_p(a)=0$ car alors pour ces premiers $p$ le nombre de solutions $a_i \in[2, p[$ cro\^it comme $O(1) {\rm log}(p)$,
ce qui peut appara\^itre comme une proportion excessive.

\smallskip
Ceci dit, l'\'etude pr\'ec\'edente, quoique tr\`es insuffisante, ainsi que les exp\'erimentations num\'eriques, me confortent dans la validit\'e des conjectures que j'ai formul\'ees dans le cadre tr\`es g\'en\'eral des r\'egulateurs $p$-adiques d'un nombre alg\'ebrique $\eta$ (cas Galoisien arbitraire) pour lesquels le quotient de Fermat n'est autre que le cas particulier de la $\theta$-composante, pour le caract\`ere unit\'e 
$\theta=1$, du r\'egulateur de $\eta$ (cf. \cite{Gr}).

\section{Remerciements}
Je remercie G\'erald Tenenbaum pour ses indications de th\'eorie analytique des nombres, dont
sa contribution \cite{T2}, et pour sa disponibilit\'e.

\end{document}